\newtheorem{thm}{Theorem}[section]
\newtheorem{cor}[thm]{Corollary}
\newtheorem{lem}[thm]{Lemma}
\newtheorem{pro}[thm]{Proposition}
\theoremstyle{definition}
\theoremstyle{remark}
\numberwithin{equation}{section}
\title[Polynomial convergence to equilibrium]{Polynomial convergence
  to equilibrium for 
   a system of interacting particles}
\author{Yao Li}
\address{Yao Li: Department of Mathematics and
    Statistics, University of Massachusetts Amherst, Amherst, MA, 01003}
\email{yaoli@math.umass.edu}
\author{Lai-Sang Young}
\address{Lai-Sang Young: Courant Institute of Mathematical
Sciences, New York University, New York, NY 10012, USA}
\email{lsy@cims.nyu.edu}
\thanks{LSY was supported in part by NSF Grant DMS-1363161. }
\begin{document}
\maketitle

\begin{abstract} We consider a stochastic particle system in which a finite
number of particles interact with one another via a common energy tank.
Interaction rate for each particle is proportional to the square root of its kinetic
energy, as is consistent with analogous mechanical models. Our main result is
that the rate of convergence to equilibrium for such a system is $\sim t^{-2}$, more
precisely it is faster than a constant times $t^{-2+\varepsilon}$ for any $\varepsilon>0$. A discussion of exponential
{\it vs} polynomial convergence for similar particle systems is included.
  \end{abstract}

\vskip .3in
This paper is about dynamical models of (large numbers of) interacting 
particles, a topic of fundamental
importance in both dynamical systems and statistical mechanics. 
Our focus is on the speed of convergence to equilibrium, equivalently 
the rate of decay of time correlations. On a fixed energy surface, Liouville
measure, which describes the states of a system in equilibrium, 
does not depend on the dynamics generated by the Hamiltonian, 
but once the system is taken {\it out of equilibrium}, the speed with which 
it returns to equilibrium can be affected by dynamical details. One of the 
purposes of this paper is 
to call attention to the fact that for particle systems, this convergence can be fast
or slow depending on how the particles interact. 

While Hamiltonian models are considered to be physically more realistic than 
stochastic ones, questions of ergodicity and mixing for general Hamiltonian systems 
are out of reach at the present time, let alone the rate of mixing. 
Simplifications on the level of modeling are necessary if one is to gain insight into 
the problem. Since chaotic dynamics are known to produce
statistics very similar to those of genuinely random stochastic processes 
\cite{sinai1970dynamical, bowen1975equilibrium, young1998statistical, chernov2000decay, rey2008large}, it seems logical to first tackle stochastic models designed to capture similar underlying phenomena. 

The following model of binary collisions introduced by Kac \cite{kac1954foundations}
half a century ago as an idealization of Boltzmann dynamics was in this spirit.
In Kac's model, the velocities 
of $N$ particles are described (abstractly) by $N$ real numbers 
$v_1, v_2, \dots, v_N$, so that the system has total energy 
$\sum_{i=1}^N v_i^2 =E$. An exponential clock rings with rate $N$.
When it rings, a pair of particles, $i$ and $j$, is randomly chosen 
and assumed to interact, resulting in new velocities, $v'_i$ and $v'_j$, given by
\begin{eqnarray*}
v'_i & = & (\cos \theta) v_i - (\sin \theta) v_j\\
v'_j & = & (\sin \theta) v_i + (\cos \theta) v_j
\end{eqnarray*}
where $\theta \in [0,2\pi)$ is uniformly distributed.
This model has been much studied. Among other things, it has been shown
that its infinitesimal generator has a spectral gap uniformly bounded  
away from zero in size for all $N$ \cite{janvresse2001spectral,
  carlen2003determination, maslen2003eigenvalues}.
Models with energy-dependent interactions, which are more realistic than the
constant rate of interaction in the original model, have also been
studied \cite{carlen2014spectral} , as have other variants of this
model; see e.g. \cite{sasada2013spectral, grigo2012mixing} for binary
collision processes on lattices and \cite{gaveau1986probabilistic} for
extensions to quantum N-body problems.  

In general, for systems with direct particle-particle interactions and an interaction potential 
that falls off with distance, it is very difficult to identify a simple stochastic rule that 
captures faithfully the deterministic dynamics.
In this paper, we consider a class of particle systems for which such modeling is 
more straightforward, namely when the particles do not interact with one another directly 
but only via their ``environment", or a ``hub".  
Concrete examples of mechanical models of this type were introduced 
in \cite{mejia2001coupled, rateitschak2000thermostating} and studied
later in \cite{eckmann2006nonequilibrium, lin2010nonequilibrium,
  li2014nonequilibrium, eckmann2007controllability,
  eckmann2006thermal, eckmann2006memory, yarmola2011ergodicity, konstantin2013ergodic}. In these models, the ``environment" is symbolized 
by the kinetic energy stored in rotating disks placed at various locations
in the physical domain. When a particle collides with a disk, 
energy is exchanged in accordance with a rule consistent with energy and angular 
momentum conservation; point particles do not ``see" each other otherwise. 
See Fig. 2.
The models considered in the present paper are a stochastic version of these 
mechanical models; details are given in Sects. 1.1 and 1.2. 

An example of the type of stochastic modification we make is that we ``forget"
the precise location of a particle, and replace the time to its next collision by
an exponential random variable with mean $\propto \frac{1}{\sqrt{e}}$ where $e$ is 
the kinetic energy of the particle. This idea was also used in 
\cite{eckmann2006nonequilibrium}, and is consistent with the statistics produced by 
chaotic dynamical systems. More detailed justification is given in Sect. 1.1.

We prove for our models that the speed of convergence to equilibrium is not exponential
but polynomial. More precisely, we show that for any $\gamma >0$,
this rate is faster than $\sim t^{\gamma-2}$. Because 
the rate of interaction is $\propto \sqrt{e}$, it is not hard to see that convergence
rate cannot be faster than $\sim t^{-2}$. Thus our results 
are sharp, and to our knowledge they are new; a literature search 
has not turned up comparable results involving polynomial rates of convergence. 
The closest that we are aware of are \cite{yarmola2013sub, yarmola2014sub}, 
which showed slower than exponential convergence for certain mechanical models 
with special properties (e.g. particles interacting only with heat baths, or 
particle systems on physical domains with special geometry). 

The speed of convergence to equilibrium, equivalently the rate of decay of time correlations, impacts the type of probabilistic limit laws obeyed 
by the system. We do not pursue that here as these questions will take us too
far afield, but remark only on some immediate consequences: 
With polynomial correlation decay, one cannot expect to have a  large deviation 
principle with a reasonable rate function \cite{wu2001large, wu2000uniformly,
    kontoyiannis2005large, balaji2000multiplicative}. As 
a result, Gallavotti-Cohen type fluctuation theorems will not hold \cite{rey2002fluctuations, lebowitz1999gallavotti}. A Markov chain 
central limit theorem for bounded observables, on the other hand, follows 
from polynomial ergodicity; see Theorem 5.

The main ideas of our proof are as follows: Since low-energy particles
are the source of slow convergence, we call a state of the system, equivalently
an energy configuration, ``active" if
every particle carries an energy above a certain minimum. Starting from
the set of active states we prove a Doeblin-type condition,  suggesting 
exponential correlation decay for an induced process. 
We then return to the full system, and propose to view the dynamics as having 
been refreshed, or renewed, each time a trajectory returns to the set of active states. 
This puts us in a framework bearing some resemblance to renewal
processes, for which it has been shown that the speed of convergence to 
equilibrium is determined 
by the moments of renewal times. Following ideas from renewal theory,
we seek to control 
first passage times to the set of active states. This is done by constructing 
a suitable Lyapunov function; see Section 2.

\medskip
\noindent {\it Polynomial vs exponential convergence: further examples.}  
The root cause of the slow convergence in our model is that once 
a particle acquires a low energy in an interaction, it simply stays 
``frozen" until its clock rings again; there is no way to activate 
it sooner. This need not be the case in models with direct particle-particle 
interactions, if another particle can pass by 
and activate a slow particle. The question of exponential 
vs polynomial rates of convergence to equilibrium is most transparent in
the setting of {\it one particle per site},  {\it nearest-neighbor interactions},
an example of which is the locally confined disk models introduced 
in \cite{bunimovich1992ergodic} 
and studied in \cite{gaspard2008heat, gaspard2008heat2}: 
A linear chain of cells 
is connected by openings. Inside each cell is 
a single finite-size convex body (hard disk), 
the diameter of which exceeds that of the opening so it is trapped, 
but adjacent disks can meet and exchange energy; see Fig. 1. 
For these models, the rate of convergence hinges on 
whether a disk can be completely out of reach of its neighbors. 
When the openings are large enough, heuristic argument and numerical
simulations both give exponential convergence.
On the other hand, if the openings 
between cells are small enough that a disk can 
get entirely out of reach of its neighbors, then a phenomenon similar
to that in the present paper can occur: it is easy to 
{\it prove} that the rate of mixing cannot be faster than $t^{-2}$;
see \cite{li2015stochastic}, which contains also a numerical study confirming 
that the rate of mixing is $\sim t^{-2}$, and
the rate of interaction between disks with kinetic energies 
$e_{i}$ and $e_{i+1}$ can be approximated by $\sim
\sqrt{\min\{ e_{i}, e_{i+1}\}}$.

We comment on related works: In a nonrigorous derivation, 
\cite{gaspard2008derivation} argued for the same model 
that under certain assumptions, the rate of interaction between the $i$th and $(i+1)$st disks is $\sim \sqrt{e_i + e_{i+1}}$. Assuming this interaction rate,  \cite{sasada2013spectral,grigo2012mixing} proved exponential rates 
of convergence for stochastic versions of these models. To our knowledge, 
this interaction rate appears in a certain rare interaction limit (when the openings
between cells tend to zero), and involves a rescaling
of time. In the mechanical model above, without any rescaling of time,
it is a simple mathematical fact that correlations cannot decay faster than $t^{-2}$ 
when the disks can ``hide" from their neighbors. 

%
%

\begin{figure}
\centerline{\includegraphics[width =
  \textwidth]{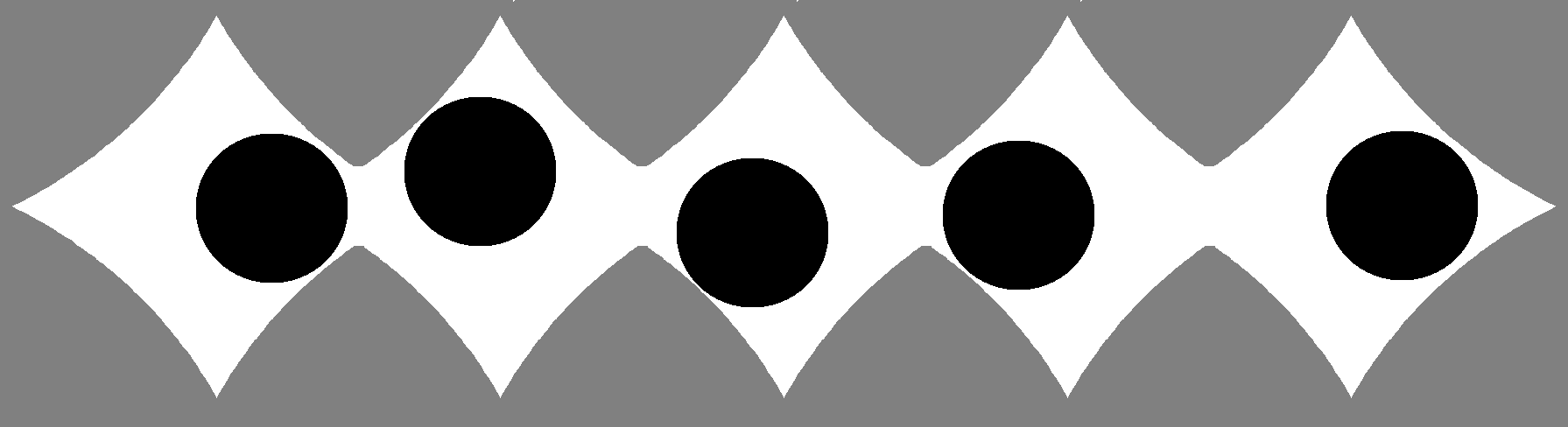}}
\caption{\small Locally confined hard disks model. Whether the system converges
to equilibrium at exponential or polynomial speeds depends on its
geometric configuration, specifically whether or not there are positions
where a disk (black) can be out of reach of  its neighbors.}
\label{fig1b}
\end{figure}

\medskip \noindent {\it Organization of this paper:} Section 1 contains a precise
model description and statement of results. The bulk of the technical work 
goes into the construction of a Lyapunov function; this is carried 
out in Section 2.  In Section 3, we use this Lyapunov function to deduce
 the desired results on polynomial convergence to equilibrium.

\medskip
\section{Model and results}

As explained in the Introduction, the models considered in this paper are stochastic
versions of some known mechanical models. We begin with a  review of these
mechanical models, followed by a discussion of the rationale for replacing
the deterministic dynamics by Markovian dynamics. Sect. 1.2 contains the precise definitions of the models
studied in the rest of this paper, and the statement of results are announced
in Sect. 1.3.

\medskip
\subsection{Mechanical models with particle-disk interactions} \ 

\medskip
We review here a class of models consisting of a rotating disk and a finite number
of  particles
in a closed domain, energy being exchanged when a particle collides with the disk.
The rules of energy exchange are borrowed from \cite{mejia2001coupled}; see also \cite{rateitschak2000thermostating}. 
These models, both in and out of equilibrium, were studied in \cite{eckmann2006nonequilibrium}.

A precise model description is as follows:
Let $\Gamma \subset \mathbb{R}^{2}$ be a bounded domain with concave
piecewise $C^{3}$ boundary; see Fig 2 for an example. 
In the interior of $\Gamma$ is a rotating disk $D$, nailed down at its center 
and rotating freely, carrying with it a finite amount
of kinetic energy. In the region $\Gamma \setminus D$ are
$m$ point particles,
each moving with uniform motion until it collides with $\partial \Gamma$ or $D$. 
Upon collision with $\partial \Gamma$, a particle is reflected elastically.
Upon collision with $D$, energy is exchanged according to the following rule:
Let $v$ be the velocity of the particle just prior to collision, $v=v_n + v_t$ its
decomposition into components that are normal and tangential to the disk,
and let $\omega$ denote the angular velocity of the disk. If $\ '$ denotes the
corresponding velocities following the collision, then from the conservation of
energy and angular momentum, one obtains, following \cite{mejia2001coupled},
\begin{eqnarray*}
v_n' & = & - v_n\\
v_t' & = & v_t - \frac{2\eta}{1+\eta} (v_t - R \omega)\\
R \omega' & = & R \omega + \frac{2}{1+\eta}(v_t - R \omega)\ .
\end{eqnarray*}
In these formulas, $\bar{m}$ is the mass of the particle, $R$ is the radius of the disk,  
$\theta$ is the moment of inertia of the disk, and $\eta = \theta/(\bar{m}R^2)$.
This is a complete description of the model. 

\begin{figure}[h]
\centerline{\includegraphics[width =
  0.65\textwidth]{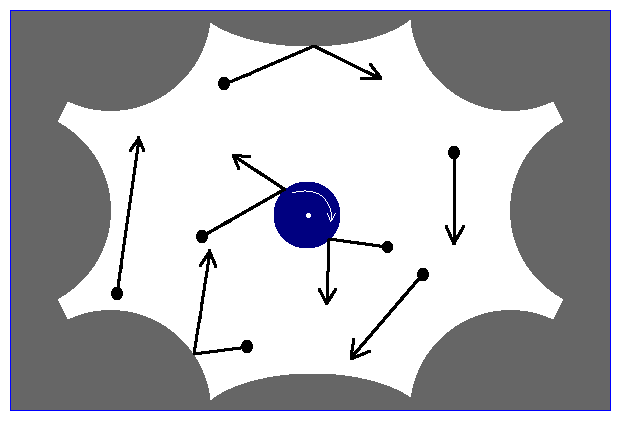}}
\label{fig1}
\caption{\small Example of a mechanical system that motivated the present study:
Particles in a domain $\Gamma $ (white) are scattered as they are reflected off
$\partial \Gamma$, and energy is exchanged when a particle
collides with the rotating disk (blue) nailed down at the center of the domain.
}
\end{figure}

Choosing $R=\eta=1$ leads to the especially simple equations
\begin{equation} \label{update}
v_n' \ = \ - v_n\ , \qquad v_t' \ = \ \omega\ , \qquad \omega' \ = \ v_t\ .
\end{equation}
For simplicity, we will work with these special parameters, though conceptually
it makes no difference in the present study.

\medskip \noindent
{\bf Connection to stochastic model}

\smallskip
Though easy to describe, an analysis of the mechanical model above is considerably
 outside of
the reach of current dynamical systems techniques. Thus we seek to simplify the model 
while retaining its essential characteristics, including the way in which energy is 
transferred among particles. By ``forgetting" the
precise locations of particles in the cell and their directions of travel, as well as
the direction of rotation of the disk, we turn the deterministic dynamical system above into a Markov process. Specifically,
the times to energy exchange for a particle are determined by exponential
distributions with mean $x^{-1/2}$ where $x$ is the instantaneous kinetic energy of the particle,
and the repartitioning of energy at exchanges are as in \eqref{update} assuming 
random angles of incidence. Details are given in Sect. 1.2.

We provide below some heuristic justification for the memory loss and interaction 
rates proposed in the last paragraph:

First we explain the rationale behind neglecting
 precise locations within a cell. Billard systems on domains 
with concave boundaries (or scatterers) are well known to exhibit chaotic, 
or hyperbolic, behavior \cite{sinai1970dynamical, chernov2006chaotic}. {\it Hyperbolicity} 
here refers to exponential divergence of nearby orbits, a property that leads 
to rapid loss of memory of trajectory history. 
By taking the rotating disk 
in our model to be relatively small, between energy exchanges
a typical particle trajectory is reflected 
many times as it bounces off the walls of the domain.
(Adding more scatterers 
in $\Gamma \setminus D$ as was done in \cite{li2014nonequilibrium} 
will further enhance mixing.)
As our system is a hyperbolic billiard between collisions with the rotating disk, 
the rapid loss of memory gives justification for neglecting precise locations 
within a cell.  

Next we explain the use of exponential random variables to describe the
times between collisions. Another well known fact for
strongly hyperbolic systems including billiards is that for points randomly distributed
in a specific region, return times to this region have exponentially small tails
\cite{young1998statistical}. Thus for particles that emerge from an energy
exchange with a fixed energy but randomly distributed otherwise in terms
of location and angle, we can expect the times to their next collision with
the disk to have
an exponentially small tail.  

Finally, fixing initial location and direction of travel, the time for a particle  to reach a 
pre-specified region is proportional to its speed; that is the rationale for assuming
mean collision time is proportional to $x^{-1/2}$. 

For another confirmation of the close connection between the stochastic model
in Sect. 1.2 and the mechanical model above, notice that modulo constants
their invariant measures coincide; see the remark following Proposition 1.


%
%

\medskip
\subsection{Precise description of stochastic model} \ 

\medskip
The stochastic model considered in the rest of this paper is a time-homogeneous 
Markov jump process $\mathbf{x}_t, t \geq 0$, with
$$
\mathbf{x}_t = (x^1_t, \dots, x^m_t, y_t)\ .$$ 
Here $m$ is a fixed positive integer, $x^1_t, \dots, x^m_t$ 
are the energies of the $m$ particles at time $t$, and $y_t$ is the energy of the
disk, which we regard from here on as an abstract ``energy tank".
As the domain is assumed to be closed, 
total energy  remains  constant in time, i.e.,
there exists a constant $\bar E>0$ such that $\sum_i x^i_t + y_t = \bar E$ for all
$t \ge 0$. Thus the state space of $\mathbf{x}_{t}$ is the open $(m+1)$-dimensional
simplex
$$
{\bf \Delta} = {\bf \Delta}^{m+1}(\bar E) = \left\{ (x^{1}, \cdots, x^{m}, y) \in
  \mathbb{R}^{m+1}_{+} \,|\, y + \sum_{i =
    1}^{m} x^{i} = \bar{E}  \right\}\,.
$$

As in the mechanical model in Sect. 1.1, the particles in this system do not 
interact directly with one another. Instead, they interact via the energy tank, 
which symbolizes the ``environment" within the domain, and it is 
these particle-tank interactions that give rise to the jumps in the process.
The rules of interaction are as follows: 
Particle $i$ carries a clock that rings at an exponential rate equal to 
$\sqrt{x^{i}_t}$; notice that this rate changes each time the particle acquires a new energy.
The clocks carried by different particles are independent of
one another and of history. When its clock rings, a particle
exchanges energy with the tank according to the following rule:
Suppose the clock of particle $i$ rings at time $t$, and let 
$\mathbf{x}_{t^+}  =  (x^1_{t^+},  x^2_{t^+},  \dots,  x^m_{t^+},  y_{t^+})$
denote the state immediately following the interaction at time $t$.
Then assuming that the angles of incidence are uniformly distributed, 
the rules for updating, i.e. \eqref{update}, translate into
\begin{equation} \label{exch}
x^i_{t^+} = y_{t} + (1-u^2) x^i_{t}, \quad y_{t^+} = u^2 x^i_{t}, \qquad
\mbox{and } \quad x^j_{t^+} = x^j_{t} \quad \mbox{ for } j \ne i\ ,
\end{equation}
where $u \in (0,1)$ is a uniform random variable. For a detailed calculation,
see \cite{li2014nonequilibrium}.

The transition probabilities above together with an initial condition $\mathbf{x}_0$
defines the Markov process $\mathbf{x}_{t}$. The notation $\mathbf{x}_t = 
(x^1_t, \dots, x^m_t, y_t)$ is used throughout; in particular, $x^i$ is used exclusively
to denote the energy of the $i$th particle, not the $i$th power of $x$.
 
\medskip
We fix also the following notation: For $t \geq 0$ and $\mathbf{x} \in \mathbf{\Delta}$,
let $P^{t}(\mathbf{x},\cdot)$ be the transition probabilities of the process $\mathbf{x}_{t}$. 
That is to say, $P^{t}(\mathbf{x},\cdot)$ is the Borel probability distribution on $\mathbf{\Delta}$
describing the possible states of the system $t$ units of time later given that its 
initial condition is $\mathbf{x}$.
To simplify notation, we use the same notation for the left and right operators 
generated by $P^{t}$:
$$
  (P^{t}\xi)( \mathbf{x}) = \int_{\mathbf \Delta} P^{t}(\mathbf{x},
  \mathrm{d}\mathbf{y}) \xi(\mathbf{y}) 
$$
for a measurable function $\xi$ on $\mathbf{\Delta}$, and 
$$
  (\mu P^{t})(A) = \int_{\mathbf \Delta} P^{t}(\mathbf{x}, A)
  \mu (\mathrm{d}\mathbf{x}) 
$$
for a probability measure $\mu$ on $\mathbf{\Delta}$. Finally we say
$\mu$ is an invariant measure for the process $\mathbf x_t$ if 
$\mu P^t = \mu$ for all $t>0$.

\medskip

\subsection{Statement of results} \ 

\bigskip \noindent
{\bf Proposition 1. } {\it The probability measure $\pi$
with density
$$
  \rho(x^{1}, \cdots, x^{m}, y) = \frac{1}{Z} \ y^{-1/2} 
$$
where $Z$ is a normalizing constant is an invariant measure 
for the process $\mathbf x_t$.}

\bigskip
By the change of variables $x = |\mathbf{v}_{i}|^{2}$ and $y =
\tilde{\omega}^{2}$, one sees that $\pi$ coincides with Liouville measure
on a fixed energy shell for a Hamiltonian system with $H=|\mathbf{v}_{i}|^{2}
+\tilde{\omega}^{2}$. Here $\mathbf{v}_{i}$ is the velocity of the $i$th particle,
and $\tilde{\omega}$ is the angular velocity of the rotating disk.

\bigskip \noindent
{\bf Theorem 1 (Uniqueness of invariant measure).} {\it The measure $\pi$ 
in Proposition 1 is the unique invariant probability for $\mathbf x_t$; hence it is ergodic.}

\bigskip \noindent
{\bf Theorem 2 (Speed of convergence to equilibrium).} {\it For every $\mathbf{x} \in \mathbf{\Delta}$ and $\gamma > 0$, 
$$
  \lim_{t\rightarrow \infty} \ t^{2-\gamma} \ \| P^{t}( \mathbf{x}, \cdot)
  - \pi \|_{TV } = 0 
$$
where $\| \cdot \|_{TV}$ is the total variational norm.}

\bigskip
Theorem 2 is in fact deduced from Theorem 3 below. For ${  \delta} >0$, let
$\mathcal{M}_{\delta}$ be the collection of probability measures $\mu$ on
$\mathbf{\Delta}$ such that  
$$
  \int_{\bf \Delta} \left ( \sum_{k = 1}^{m} (x^{k})^{2{\delta}- 1} +
    y^{{\delta}- \frac{1}{2}} \right )
  \mu(\mathrm{d}\mathbf{x}) \ < \ \infty\ .
$$

\bigskip \noindent
{\bf Theorem 3 (Polynomial contraction of Markov operator).} {\it For any $\gamma>0$ and 
$\mu, \nu \in \mathcal{M}_{\gamma/8}$, }
$$
  \lim_{t \rightarrow \infty} \ t^{2-\gamma} \ \| \mu P^{t} - \nu P^{t}\|_{TV}
  = 0 \,.
$$

\medskip 
The following simple argument shows that the bound in Theorem 3 is tight: 
Consider, for example, two initial distributions $\mu$ and $\nu$ that differ 
by a positive amount when restricted to the set $B_\epsilon:=\{x^i < \epsilon\}$ 
for some fixed $i$. For definiteness, let us assume that for all small enough 
$\epsilon$, 
$\mu|_{B_\epsilon} \le c\pi|_{B_\epsilon}$ and $\nu|_{B_\epsilon} \ge c'\pi|_{B_\epsilon}$ for some $c<1<c'$. Since $\pi(B_\epsilon) \propto \epsilon$, $x^i< \frac{1}{t^2}$ implies
that the probability with respect to $\pi$ of the $i$th clock ringing
before time $t$ is $< 1- e^{-1}$. It follows that 
$$
\| \mu P^{t} - \nu P^{t}\|_{TV} \ge \| (\mu P^{t} - \nu P^{t})|_{\{x^i < \frac{1}{t^2}\}} \|_{TV}
\ge \mbox{ constant} \cdot \frac{1}{t^2}\ .
$$

Another corollary of Theorem 3 is the rate of decay of time
correlations.

\bigskip \noindent
{\bf Theorem 4 (Polynomial correlation decay).} {\it For any $\gamma>0$ and 
$\mu \in \mathcal{M}_{\gamma/8}$, let $\xi$ and $\zeta \in L^{\infty}(\mathbf{\Delta})$. 
Then 
$$
  \left|\int_{ \mathbf{\Delta}} (P^{t} \zeta)(
  \mathbf{x}) \xi( \mathbf{x}) \mu( \mathrm{d}\mathbf{x}) - \int_{
    \mathbf{\Delta}} (P^{t}\zeta)( \mathbf{x}) \mu(
  \mathrm{d}\mathbf{x}) \int_{\mathbf{\Delta}} \xi( \mathbf{x}) \mu(
  \mathrm{d} \mathbf{x}) \right| \ = \ o \left(\frac{1}{t^{2-\gamma}} \right) 
  $$
as $t \to \infty$.}

\bigskip
The next result is another consequence of Theorem 3.

\bigskip
\noindent 
{\bf Theorem 5 (Central limit theorem). } {\it Let $f:
  \mathbf{\Delta} \rightarrow \mathbb{R}$ be a Borel function that is
uniformly bounded  $\pi$-a.s. For any $\delta>0$, let 
$\{f^{\delta}_{n}\}_{n = 1}^{\infty} = \{
f(\mathbf{x}_{0}), f(\mathbf{x}_{\delta}),\cdots, f(
\mathbf{x}_{n\delta}), \cdots\}$ be a sequence of observables,
and define 
$$
\bar{f} = \frac{1}{n}\sum_{i = 0}^{n} f^{\delta}_{n} \,.
$$
Then for any initial distribution $\mathbf{x}_{0}$, 
$$
  \sqrt{n}( \bar{f} - \mathbb{E}_{\pi}f ) \xrightarrow{d} N(0,
  \sigma^{2}_{f})   \qquad \mbox{ as } n \rightarrow \infty\ ,
$$
provided 
$$
  \sigma_{f}^{2} := \mathrm{var}_{\pi}\{ f( \mathbf{x}_{0}) \} + 2
  \sum_{i = 1}^{\infty} \mathrm{cov}\{ f(\mathbf{x}_{0}),
  f(\mathbf{x}_{i\delta})\} < \infty \,.
$$
}

\section{Construction of Lyapunov function}

Let $\mathbf{\Delta}$ be as in Sect. 1.2. For $\alpha < \frac12$, we define
$V = V_{\alpha} : \mathbf{\Delta} \to \mathbb R^+$ by
$$
  V( \mathbf{x}) = V_{\alpha}( \mathbf{x}) = \sum_{i = 1}^{m} (x^{i})^{-2\alpha} + y^{-\alpha}\ .
$$
Our main technical result is the following:

\medskip
\begin{thm}
\label{lyapunov} For $\alpha < \frac12$ close enough to $\frac12$ and $h>0$ small enough, 
there exist $c_{0}, M>0$ depending on $\alpha$ and $h$
such that for $V=V_\alpha$ and $\beta = 1- (4\alpha)^{-1}$,
$$
  (P^{h}V)( \mathbf{x}) - V( \mathbf{x}) \leq -c_{0} V( \mathbf{x})^{\beta}
$$
for every $\mathbf{x}  \in \{V > M\}$.
\end{thm}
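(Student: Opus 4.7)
The plan is to prove an infinitesimal version of the inequality first and then lift it to the time-$h$ statement by Dynkin's formula. Specifically, I would first establish
$$
 LV(\mathbf{x}) \;\leq\; -c_0'\,V(\mathbf{x})^\beta \qquad \text{on } \{V>M'\},
$$
where $L$ is the infinitesimal generator of the jump process. Writing $\mathbf{x}^{(i,u)}$ for the post-jump state produced when particle $i$'s clock rings with parameter $u\in(0,1)$ (as in \eqref{exch}), this generator takes the form
$$
 LV(\mathbf{x}) \;=\; \sum_{i=1}^m \sqrt{x^i}\int_0^1\!\bigl[V(\mathbf{x}^{(i,u)})-V(\mathbf{x})\bigr]\,du,
$$
and since only the $i$th particle slot and the tank slot change, each bracket reduces to
$$
 \bigl[(y+(1-u^2)x^i)^{-2\alpha}-(x^i)^{-2\alpha}\bigr]\;+\;\bigl[(u^2x^i)^{-\alpha}-y^{-\alpha}\bigr].
$$
All of the resulting $u$-integrals are controlled by $\int_0^1 u^{-2\alpha}\,du = (1-2\alpha)^{-1}$ and the companion integral $\int_0^1 (1-u^2)^{-2\alpha}\,du$, both finite precisely because $\alpha<\tfrac12$.

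I would decompose $\{V>M'\}$, up to comparable constants, into two principal regimes: (A) some particle energy $x^{i^*}$ is very small, so that $V\asymp (x^{i^*})^{-2\alpha}$, and (B) the tank $y$ is very small while all $x^j\gtrsim 1$, so that $V\asymp y^{-\alpha}$. In regime (A), the decisive effect comes from the $i^*$-jump: its rate $\sqrt{x^{i^*}}$ multiplied by the removal of the singular term $(x^{i^*})^{-2\alpha}$ yields a contribution of size $-(x^{i^*})^{1/2-2\alpha}$. The choice $\beta=1-1/(4\alpha)$ is engineered precisely so that $V^\beta \asymp (x^{i^*})^{-2\alpha\beta} = (x^{i^*})^{1/2-2\alpha}$, matching this drift. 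One must then verify that the competing pieces are of smaller order: jumps of particles $j\neq i^*$ have rates and per-jump variations that are uniformly bounded, contributing $O(1)$; and inside the $i^*$-jump, the integrals $\int_0^1(y+(1-u^2)x^{i^*})^{-2\alpha}du$ and $\int_0^1(u^2 x^{i^*})^{-\alpha}du$ are dominated respectively by $(x^{i^*})^{-2\alpha}/(1-2\alpha)$ and $(x^{i^*})^{-\alpha}/(1-2\alpha)$, both beaten by the cancellation against $(x^{i^*})^{-2\alpha}$ once $M'$ is large. In regime (B), the roles switch: each $j$-jump has rate $\sqrt{x^j}\gtrsim 1$ and contributes $\sqrt{x^j}\bigl[(x^j)^{-\alpha}/(1-2\alpha)-y^{-\alpha}\bigr]$, which sums to a drift of order $-c\,y^{-\alpha}$. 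Since $\alpha>\alpha\beta$ and $y<1$, one has $y^{-\alpha}\geq y^{-\alpha\beta}\asymp V^\beta$, closing (B).

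The hard part will be the intermediate subregion where $x^{i^*}$ is small but $y\lesssim x^{i^*}$. There the first bracket above can acquire the full factor $C(\alpha)(x^{i^*})^{-2\alpha}$ with $C(\alpha)=\int_0^1(1-u^2)^{-2\alpha}du>1$, so the $i^*$-jump actually makes a positive contribution of order $(x^{i^*})^{1/2-2\alpha}$ comparable to $V^\beta$. The remedy is to partition $\{V>M'\}$ more finely (into sub-cases indexed by the ratio $y/x^{i^*}$ and by which coordinate is driving $V$) and pair this positive term against the guaranteed $-c(m-1)y^{-\alpha}$ drift from the $j\neq i^*$ jumps, using that when $V$ is dominated by $(x^{i^*})^{-2\alpha}$ the tank drift is still at least $-cy^{-\alpha}\leq -c V^\beta$ for $y$ in the relevant range. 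This is where the hypothesis ``$\alpha$ close to $\tfrac12$'' enters: it keeps $\beta$ close enough to $\tfrac12$ that $V^\beta$ is comparable to the smaller of the available negative drifts in every sub-case, and fixes uniform constants $c_0'$ and $M'$.

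To pass to the time-$h$ statement I would apply Dynkin's formula
$$
 P^h V(\mathbf{x})-V(\mathbf{x}) \;=\; \mathbb{E}_\mathbf{x}\!\left[\int_0^h LV(\mathbf{x}_s)\,ds\right],
$$
whose justification uses that $V$ has finite $\pi$-moments and that $P^sV$ is finite on $\mathbf{\Delta}$ (again via $\alpha<\tfrac12$). For $\mathbf{x}\in\{V>M\}$ the probability that some clock rings in $[0,h]$ is $O(h\sum_j\sqrt{x^j})$; on the complement $V(\mathbf{x}_s)\equiv V(\mathbf{x})$, while on the exceptional event a single-jump moment estimate shows $\mathbb{E}\,V(\mathbf{x}_s)$ stays comparable to $V(\mathbf{x})$. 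Together these upgrade the generator inequality to $P^hV-V\leq -c_0 V^\beta$ for $h$ small enough and $M$ slightly larger than $M'$, which is the conclusion of Theorem \ref{lyapunov}.
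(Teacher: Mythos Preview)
Your outline is correct and close to the paper's own argument: the generator bound $LV\le -c^*V^\beta$ near $\partial\mathbf{\Delta}$ is exactly Lemma~2.2 (since $LV=\sum_i Q_i$ in the paper's notation), and the paper's alternate proof in Section~3.4 even quotes it in this form. Two points of execution, however, need tightening.

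First, the claim that jumps of $j\ne i^*$ ``have rates and per-jump variations that are uniformly bounded, contributing $O(1)$'' fails when several particle energies are small simultaneously. The paper's three-region decomposition (two thresholds $\epsilon_0\ll\epsilon_1$, with cases according to the relative sizes of $y$ and $x^1=\min_i x^i$) handles this cleanly: the key device is that when $x^j<y/2$ one has $Q_j<0$ outright (equation~\eqref{x<y/2}), so small-energy particles other than the minimizer help rather than hurt. In your ``hard part'' (the paper's Region~III, where $y$ is smallest) the paper does not isolate $Q_{i^*}$ at all; it bounds every $Q_i$ uniformly via $x^i\ge 4^{-1/(2\alpha)}y$ and then uses that at least one $x^j\ge\bar E/(2m)$ to extract a single term $-\sqrt{\bar E/(2m)}\,y^{-\alpha}$, which dominates the $m$ positive pieces $Cy^{\frac12-2\alpha}$ once $y<\epsilon_0$ (this uses $\alpha<\tfrac12$). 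Your ``guaranteed $-c(m-1)y^{-\alpha}$'' overcounts: only one large-energy particle is assured by energy conservation.

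Second, the passage to time $h$: the paper does \emph{not} use Dynkin's formula. It works directly with the successive ring times $\tau_n$, combining the crude one-step bound $\mathbb{E}[V(\mathbf{x}_{\tau_{n+1}^+})\mid\mathbf{x}_{\tau_n^+}]\le V(\mathbf{x}_{\tau_n^+})+(M_0+M_1)$ (valid whether or not $\mathbf{x}_{\tau_n^+}\in B$) with the geometric tail $\mathbb{P}[\tau_{n+1}\le h]\le(1-e^{-h\sqrt{m\bar E}})^{n+1}$ coming from the bounded total rate. Telescoping, $P^hV(\mathbf{x})$ is controlled by $\mathbb{E}[V(\mathbf{x}_{\hat\tau_1^+})]$ plus a convergent geometric series of constants, after which Lemma~2.2 applied to the first jump (together with $\mathbb{P}[\tau_1\le h]\ge \tfrac{h}{2}\sum_i\sqrt{x^i}$ for small $h$) gives the claimed inequality. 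Your Dynkin route can be made rigorous, but the justification you offer (``$V$ has finite $\pi$-moments'') is not the relevant one: what is actually needed is $\sup_{s\le h}\mathbb{E}_{\mathbf{x}}[V(\mathbf{x}_s)]<\infty$ for each fixed starting point $\mathbf{x}$, and that is precisely what the paper's jump-counting argument establishes along the way.
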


\smallskip
The motivation for this choice of Lyapunov function is as follows. As noted
in the Introduction, low energy particles are our main concern, for
they are not expected to interact for a long time, and that slows down the mixing
process. For this reason, a desirable Lyapunov function should satisfy 
$V( \mathbf{x}) \rightarrow \infty$ as $\mathbf{x} \rightarrow \partial \mathbf{\Delta}$.
We explain heuristically why one may expect something along the lines of  
$P^{h}V - V \sim - h V^{1/2}$, corresponding to $\alpha, \beta \approx 2$:
Assume $x^1 \ll 1$
  is the smallest particle energy. Then $V(\mathbf{x}) \sim
  (x^{1})^{-1}$. If the clock of particle $1$ rings on the time interval $[0,
  h)$ and $y$ is ``large'', then the expected drop of $V( \mathbf{x})$ following
  an interaction
  is $\sim (x^{1})^{-1} \sim V( \mathbf{x})$. But the probability that
  the clock of particle $1$ will ring exactly once before time $h$ is  
  $\sim h \sqrt{x^{1}}$. This
  means the expected drop of $V(\mathbf{x})$ is $\sim h (x^{1})^{-1/2}
  \sim h V^{1/2}$.

\bigskip
It is convenient to use the following equivalent description of $\Phi_{t}$: 
Starting from $t = 0$, 
a clock rings at time $\tau_{1}$ where $\tau_{1}$ is an exponential 
random variable with mean $(\sum_{i  =1}^{m}
\sqrt{x^{i}_{0}} )^{-1}$. When this clock rings, energy exchange takes
place between exactly one particle and the tank, and the probability that
particle $i$ is chosen is 
$$
  \frac{\sqrt{x^{i}_{0}}}{\sum_{i = 1}^{m} \sqrt{x^{i}_{0}}} \,.
$$
The rule of energy redistribution is determined by equation
\eqref{exch} as before, and this process is repeated, i.e., at time $\tau_2$,
an exponential random variable with mean $\left(\sum_{i  =1}^{m}
\sqrt{x^{i}_{\tau_1^+}}\right)^{-1}$, the clock rings again, and so on.

We begin with the following technical estimate:

\begin{lem}
There exist constants $\epsilon_{0} > 0$ and $c^* > 0$ such that 
$$
  \mathbb{E}[V( \mathbf{x}_{\tau_{1}^+}) \,|\,
 \mathbf{x}_0]  \leq V( \mathbf{x}_{0}) -\frac{c^*}{\sum_{i = 1}^{m} \sqrt{x^{i}_{0}}}
  V( \mathbf{x}_{0})^\beta
$$
for every $\mathbf{x}_0 \in B$, where 
$$
    B = \left \{ \mathbf{x} \in \mathbf{\Delta} \,|\, y < \epsilon_{0}, \ \mbox{ or } \ 
  x^{i} < 4^{-\frac{1}{2\alpha}} \epsilon_{0} \ \mbox{ for some } \  i
  \in\{ 1, \dots, m \} \right \}\ .
$$
\end{lem}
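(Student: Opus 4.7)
First I would write out $\mathbb{E}[V(\mathbf{x}_{\tau_{1}^{+}}) \mid \mathbf{x}_0]$ explicitly. Since particle $i$'s clock fires first with probability $\sqrt{x^i_0}/\Lambda_0$, where $\Lambda_0 = \sum_i \sqrt{x^i_0}$, and only the coordinates $x^i$ and $y$ change when $i$ fires, substituting the update rule \eqref{exch} and integrating over the uniform random variable $u$ (using $\int_0^1 u^{-2\alpha}\,du = (1-2\alpha)^{-1}$) gives
\[
V(\mathbf{x}_0) - \mathbb{E}[V(\mathbf{x}_{\tau_{1}^{+}}) \mid \mathbf{x}_0] = y_0^{-\alpha} + \frac{1}{\Lambda_0}\sum_{i=1}^m \Big[ (x^i_0)^{1/2-2\alpha} - \sqrt{x^i_0}\,\mathbb{E}[(y_0+(1-u^2)x^i_0)^{-2\alpha}] - \frac{(x^i_0)^{1/2-\alpha}}{1-2\alpha}\Big].
\]
The three terms inside the bracket represent the loss in $(x^i)^{-2\alpha}$ that occurs when $i$ fires, the gain in $(x^i)^{-2\alpha}$ from the refreshed value $y_0+(1-u^2)x^i_0$, and the gain in $y^{-\alpha}$ from $y_{\tau_1^+} = u^2 x^i_0$, respectively. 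My strategy is to show that on each of the two defining pieces of $B$, one term on the right provides a drop of order $V^\beta/\Lambda_0$ while the remaining terms can be absorbed.

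I would split $B$ into two cases. On $\{y_0 < \epsilon_0\}$ the term $y_0^{-\alpha}$ is the source of the drop: because $\alpha\beta = \alpha - 1/4 < \alpha$ and $y_0 < 1$, one has $y_0^{-\alpha} \ge y_0^{-\alpha\beta}$, which covers the $y^{-\alpha\beta}$ component of $V^\beta$ coming from the subadditivity bound $V^\beta \le y_0^{-\alpha\beta} + \sum_i (x^i_0)^{-2\alpha\beta}$; by Cauchy--Schwarz $\Lambda_0 \ge \sqrt{\bar E - \epsilon_0}$, so the factor $1/\Lambda_0$ in the target inequality loses nothing essential. On $\{\exists\, i_0 : x^{i_0}_0 < 4^{-1/(2\alpha)}\epsilon_0\}$, the threshold is tuned so that $(x^{i_0}_0)^{-2\alpha} > 4\,\epsilon_0^{-2\alpha}$; when $i_0$ fires, the new $x^{i_0}$ is at least $y_0$, so in the subcase $y_0 \ge \epsilon_0$ the new $(x^{i_0})^{-2\alpha}$ is at most $\epsilon_0^{-2\alpha} < \tfrac{1}{4}(x^{i_0}_0)^{-2\alpha}$, and after choosing $\epsilon_0$ small enough (depending on $\alpha$) one also has $(x^{i_0}_0)^{-\alpha}/(1-2\alpha) \le \tfrac{1}{4}(x^{i_0}_0)^{-2\alpha}$. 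This makes the $i_0$-bracket contribute a drop of at least $\tfrac{1}{2}(x^{i_0}_0)^{1/2-2\alpha}$. The choice $\beta = 1 - (4\alpha)^{-1}$ is what makes $(x^{i_0}_0)^{-2\alpha\beta} = (x^{i_0}_0)^{1/2-2\alpha}$, matching the corresponding component of $V^\beta$.

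The main obstacle is controlling the ``error'' contributions from the particles that do \emph{not} supply the main drop. The crude bound $\mathbb{E}[(y_0+(1-u^2)x^i_0)^{-2\alpha}] \le C_1 (x^i_0)^{-2\alpha}$ with $C_1 = \int_0^1 (1-u^2)^{-2\alpha}du$ is unusable because $C_1 \to \infty$ as $\alpha \to 1/2$. Instead I would split the $u$-integral at $u_* = \sqrt{1 - y_0/x^i_0}$ (when $x^i_0 > y_0$): the tail $u \in (u_*, 1)$ has length at most $y_0/x^i_0$ and integrand bounded by $y_0^{-2\alpha}$, contributing at most $y_0^{1-2\alpha}/x^i_0$; on $u \in (0, u_*)$ the integrand is bounded by $((1-u^2)x^i_0)^{-2\alpha}$ and the truncated integral $\int_0^{u_*}(1-u)^{-2\alpha}du \le (1-2\alpha)^{-1}$, contributing at most $(x^i_0)^{-2\alpha}/(1-2\alpha)$. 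Combined with the trivial bound $\mathbb{E}[(y_0+(1-u^2)x^i_0)^{-2\alpha}] \le y_0^{-2\alpha}$ when $x^i_0 \le y_0$, this shows that both error sums $\Lambda_0^{-1}\sum_i \sqrt{x^i_0}\,\mathbb{E}[(y_0+(1-u^2)x^i_0)^{-2\alpha}]$ and $\Lambda_0^{-1}(1-2\alpha)^{-1}\sum_i (x^i_0)^{1/2-\alpha}$ are dominated by a fraction of the main drop once $\epsilon_0$ is taken small enough; the resulting constant $c^*$ depends on $\alpha, \epsilon_0, m,$ and $\bar E$.
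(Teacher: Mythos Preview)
Your formula for the drop is correct and the overall strategy (locate a dominant positive contribution, then bound the remaining terms) matches the paper. The gap is in the case analysis: a two-case split along the single scale $\epsilon_0$ is not fine enough, and your error estimate cannot close the argument near the boundary of $B$.

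Concretely, consider case 2 at a point with $x^{i_0}$ just below the threshold $4^{-1/(2\alpha)}\epsilon_0$, with $y_0$ and several other $x^j$ equal to $\epsilon_0$. Your $u$-integral split gives, for $x^j\ge y_0$,
\[
\sqrt{x^j}\,\mathbb{E}\bigl[(y_0+(1-u^2)x^j)^{-2\alpha}\bigr]\ \le\ \frac{y_0^{1-2\alpha}}{\sqrt{x^j}}+\frac{(x^j)^{1/2-2\alpha}}{1-2\alpha},
\]
so the negative part of the $j$-bracket is at least $(1-2\alpha)^{-1}(x^j)^{1/2-2\alpha}$, which \emph{exceeds} the positive $(x^j)^{1/2-2\alpha}$ term whenever $\alpha>0$. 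Summed over the $j\ne i_0$ with $x^j\approx\epsilon_0$, this produces a deficit of order $m\,\epsilon_0^{1/2-2\alpha}/(1-2\alpha)$, while the $i_0$-bracket only supplies $\approx 4^{1-1/(4\alpha)}\epsilon_0^{1/2-2\alpha}$. For $\alpha$ close to $\tfrac12$ (the regime the theorem needs) the errors swamp the drop and your inequality does not follow. Similarly, in case 1 you only argue that $y_0^{-\alpha}$ covers the $y_0^{-\alpha\beta}$ piece of $V^\beta$; when some $x^{i_0}\ll y_0<\epsilon_0$ the dominant piece of $V^\beta$ is $(x^{i_0})^{1/2-2\alpha}$, and you have not said how it is covered.

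The paper's fix is to introduce a \emph{second} scale $\epsilon_1\gg\epsilon_0$ and split $B$ into three regions according to the relative sizes of $x^1=\min_i x^i$ and $y$: (I) $x^1<4^{-1/(2\alpha)}\epsilon_0$, $y\ge\epsilon_1$; (II) $x^1<4^{-1/(2\alpha)}\epsilon_0$, $4^{1/(2\alpha)}x^1<y<\epsilon_1$; (III) $y\le 4^{1/(2\alpha)}x^1$, $y<\epsilon_0$. In Regions I--II the condition $y>4^{1/(2\alpha)}x^1$ makes the $1$-bracket drop by at least $\tfrac12(x^1)^{1/2-2\alpha}$; particles with $x^j<\tfrac12 y$ contribute nonpositive $Q_j$ (since then $\mathbb E[\dots]\le(2x^j)^{-2\alpha}=4^{-\alpha}(x^j)^{-2\alpha}$, a genuine fraction), and those with $x^j\ge\tfrac12\epsilon_1$ contribute only $O(\epsilon_1^{1/2-2\alpha})$, which is beaten by taking $\epsilon_0\ll\epsilon_1$. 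In Region III one has $V\le(4m+1)y^{-2\alpha}$, so the $y^{-\alpha}$ drop alone controls $V^\beta$. Your sketch is missing this two-scale separation; the clever $u_*$-split is not needed and does not by itself replace it.
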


\begin{proof} By definition,
$$
\mathbb{E}[V( \mathbf{x}_{\tau_{1}^+})  \,|\,
 \mathbf{x}_0] \ = \ V( \mathbf{x}_{0}) +  \frac{1}{\sum_{i = 1}^{m} \sqrt{x^{i}_{0}}}  \sum_{i = 1}^{m} Q_{i}
$$
where
$$ 
Q_i \ = \ \sqrt{x^{i}_{0}} \left \{
    \int_{0}^{1} \left[ ( x_{0}^{i}(1 - u^{2}) + y_{0} )^{-2\alpha} +
    (x_{0}^{i} u^{2})^{-\alpha} \right] \mathrm{d}u - \left[(x_{0}^{i})^{-2\alpha} +
    y_{0}^{-\alpha} \right] \right \}\ ,
$$
i.e., we need to show $\sum Q_i \le -c^*V( \mathbf{x}_{0})^\beta$ for some $c^*>0$.
In the rest of the proof, we will omit the subscript $0$ in $\mathbf{x}_{0}, \ x^i_0$ 
and $y_0$, and write
$$
  C_{1} = \int_{0}^{1} (1 - u^{2})^{-2\alpha} \mathrm{d}u
  \qquad \mbox{ and } \qquad 
  C_{2} = \int_{0}^{1} u^{-2\alpha} \mathrm{d}u ,
$$
noting that $C_{1}, C_{2} < \infty$ for $\alpha <\frac{1}{2}$. We will use
many times the bound
\begin{equation} \label{qi}
Q_i \ \le \ \sqrt{x^{i}} \left \{ \min\{C_1 (x^i)^{-2\alpha}, y^{-2\alpha}\} + 
C_2 (x^i)^{-\alpha} - (x^i)^{-2\alpha} - y^{-\alpha} \right\}\ .
\end{equation} 
Without loss of generality assume 
$$x^{1} = \min_{1\leq i \leq m}
x^{i}\ .
$$
 Let $0 < \epsilon_{0} \ll \epsilon_{1} \ll \bar{E}$  be
two small numbers to be determined. We decompose $B$, the neighborhood
of $\partial {\bf \Delta}$ in the statement of the lemma, 
into three regions (see Fig 3) and analyze each one as follows: 

\begin{figure}[h]
\centerline{\includegraphics[width = 6cm]{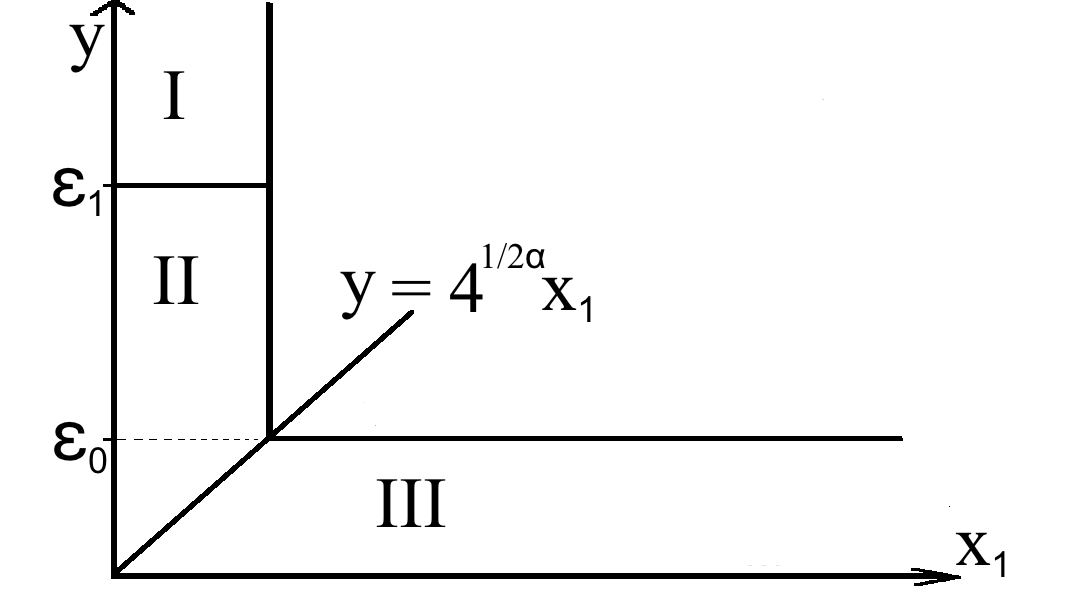}}
\caption{ Decomposition of neighborhood of $\partial \mathbf{\Delta}$}

\label{boundary}
\end{figure}


\smallskip \noindent
{\bf Region I.}  \ $4^{\frac{1}{2 \alpha}} x^{1} < \epsilon_{0}, \ y \geq \epsilon_{1}$ 

\smallskip
With regard to lowering $V$, we clearly have the most to gain 
if particle 1 interacts with the tank: Applying \eqref{qi} to $x^{1}$ and substituting
in $y \geq \epsilon_{1}$, we obtain
\begin{eqnarray*}
 Q_{1}&\leq & \sqrt{x^{1}} \cdot \left\{  (\epsilon_{1})^{-2\alpha} + C_{2}
  (x^{1})^{-\alpha} - (x^{1})^{-2\alpha} \right \} \ .
\end{eqnarray*}
Using $4^{\frac{1}{2 \alpha}} x^{1} <\epsilon_0 \ll \epsilon_1$, 
we see that the third term dominates. Hence
$$
Q_1 \le -\frac{1}{2}(x^{1})^{-2\alpha + \frac{1}{2}}\ .
$$

For $i \ne 1$, we consider separately the following two cases:
For $x^{i} < \frac{1}{2} \epsilon_{1}<\frac{1}{2}y$, we have
\begin{equation} \label{x<y/2}
 Q_{i} \ \leq \  \sqrt{x^{i}} \left \{ (2x^i)^{-2\alpha} + C_{2}(x^{i})^{-\alpha} 
 - (x^{i})^{-2\alpha} \right \} \ ,
\end{equation}
which is $<0$ since the last term dominates.
If $x^{i}\geq \frac{1}{2} \epsilon_{1}$, then from \eqref{qi} we obtain
$$
Q_{i} \ \leq \ \sqrt{x^{i}} \{C_1(x^i)^{-2\alpha} + C_2 (x^i)^{-\alpha}\}
\le C' (\epsilon_1)^{- 2 \alpha + \frac12}
$$
for some $C'$ independent of $\epsilon_0$ or $\epsilon_1$. Notice
that we have used $\frac{1}{2} - 2 \alpha<0$, or $\alpha > \frac14$.

Altogether, we have shown, using $\epsilon_0 \ll \epsilon_1 \ll 1$, that
\begin{eqnarray*}
  \sum_{i = 1}^{m} Q_{i} & \leq & -\frac{1}{2} (x^{1})^{-2\alpha +
    \frac{1}{2}} + (m-1)C' (\epsilon_1)^{- 2 \alpha + \frac12}
    \leq -\frac{1}{3} (x^{1})^{-2\alpha + \frac{1}{2}}\ .
    \end{eqnarray*}
It follows from $V(\mathbf{x}) \le (m+1)(x^1)^{-2\alpha}$ that this is    
 $ \leq  - \frac{1}{3(m+1)} V(\mathbf{x})^\beta$.

\medskip \noindent
{\bf Region II.} \ $4^{\frac{1}{2 \alpha}} x^{1} < \epsilon_{0}, \ 4^{\frac{1}{2\alpha} } x^{1}
< y < \epsilon_{1}$

\smallskip

For $i=1$, applying \eqref{qi} and using $y > 4^{\frac{1}{2\alpha}} x^{1}$,
we obtain
\begin{eqnarray*}
 Q_{1}&\leq & \sqrt{x^{1}} \left\{ (4^{\frac{1}{2\alpha} } x^{1})^{-2\alpha}
 + C_{2} (x^{1})^{-\alpha} - (x^{1})^{-2\alpha}\right\}
\leq -\frac{1}{2}(x^{1})^{-2\alpha + \frac{1}{2}} \ .
\end{eqnarray*}

For $i \ne 1$, if $x^{i} < \frac{1}{2} y$, then the situation is as in (\ref{x<y/2}),
and $Q_i <0$.
The case where $x^{i} \ge \frac{1}{2} y$ is one of the more delicate: 
Applying \eqref{qi}, we obtain
$$
Q_{i} \ \leq \ C'' x^{- 2 \alpha + \frac12} - \sqrt{x} y^{-\alpha} \ \le
C''' y^{- 2 \alpha + \frac12} - \sqrt{x} y^{-\alpha}\ .
$$
Without loss of generality, assume $x^{2}, \cdots, x^{k} \ge \frac{1}{2}y$,
and $x^j < \frac{1}{2} y$ for all $j > k$. Then $\max \{x^2, \dots, x^k\} 
> \frac{\bar{E}}{2m}$. Therefore
\begin{eqnarray*}
\sum_{i = 2}^{k} Q_{i}& \leq  & (k-1) C''' y^{- 2 \alpha +\frac12} 
- \left(\sum_{i = 2}^{k} \sqrt{x^{i}} \right)y^{-\alpha}  \\
&\leq& y^{-\alpha}\left[ m C'''y^{-\alpha + \frac{1}{2}}
- \sqrt{\frac{ \bar{E}}{2m}}\right]\ .
\end{eqnarray*}
As $y< \epsilon_1$ and $\alpha < \frac12$, the quantity in square
brackets is $<0$ provided 
$\epsilon_1$ is sufficiently small. 

Thus arguing as in Region I, we have shown that
$$
\sum_{i = 1}^{m} Q_{i} \ \leq \ -\frac{1}{2} (x^{1})^{-2\alpha +
    \frac{1}{2}} \ < \  - \frac{1}{2(m+1)} V(\mathbf{x})^\beta\ .
    $$

\medskip \noindent
{\bf Region III.} \ $4^{\frac{1}{2 \alpha}} x^{1} \geq y,\  y < \epsilon_{0} $

\smallskip

Since $x^{i} \ge x^1 \geq 4^{-\frac{1}{2\alpha} } y$ for all $i$, a calculation
analogous to that in Region II gives
$$
\sum_{i = 1}^{m} Q_{i} \  \leq  \  y^{-\alpha}\left[ m C''''y^{ - \alpha +\frac12}
- \sqrt{\frac{ \bar{E}}{2m}}\right] \ < \ -\frac{1}{2}\sqrt{\frac{\bar{E}}{2m}} \ y^{-\alpha}
$$
provided $\epsilon_{0}$ is small enough. Since $V( \mathbf{x}) \leq m 
(x^{1})^{-2\alpha} + y^{-\alpha} \leq
(4m+1)y^{-2\alpha}$, it follows that 
$$
  \sum_{i = 1}^{m} Q_{i} \leq -\frac{1}{2}\sqrt{\frac{\bar{E}}{2m}}
  \cdot \frac{1}{\sqrt{4m+1}} V( \mathbf{x})^{\frac12} \leq 
  -\frac{1}{2}\sqrt{\frac{\bar{E}}{2m}}
  \cdot \frac{1}{\sqrt{4m+1}} V( \mathbf{x})^\beta
$$
since $\beta < \frac{1}{2}$. 

\medskip
The assertion is proved since it holds for $\mathbf{x}_0$ in all three
regions of $B$.
\end{proof}

 \medskip
\begin{proof}[Proof of Theorem 2.1]
Let $\tau_1 < \tau_2 < \dots$ be the times of clock rings as defined in
the paragraph preceding the statement of Lemma 2.2, and let $B$ be the 
neighborhood of $\partial \bf \Delta$ in Lemma 2.2. 
Letting $\tau_0=0$, we have shown that 
for any $n\ge 0$, if $\mathbf{x}_{\tau_n^+} \in B$, then
\begin{equation}\label{V1}
\mathbb{E}[V( \mathbf{x}_{\tau_{n+1}^+}) \,|\, \mathbf{x}_{\tau_n^+}] \ \le \ 
V(\mathbf{x}_{\tau_n^+})\ - \ \frac{c^*}{\sum_{i = 1}^{m} \sqrt{x^{i}_{\tau_n^+}}}
\  V( \mathbf{x}_{\tau_n^+})^\beta\ .
\end{equation}
For $\mathbf{x}_{\tau_{n}^+} \not \in B$, we will use the bound
\begin{equation} \label{V2}
\mathbb{E}[V( \mathbf{x}_{\tau_{n+1}^+}) \,|\, \mathbf{x}_{\tau_n^+}] \ \le \ M_0 + M_1
\end{equation}
where
$$M_{0} = \sup_{\mathbf{x} \in {\bf \Delta} \setminus B} V( \mathbf{x})
\quad \mbox{ and } \quad
  M_{1}  = \sup_{\mathbf{x} \in {\bf \Delta} \setminus B} \frac{1}{ \sum_{i = 1}^{m}
    \sqrt{x^{i}}} \sum_{i = 1}^{m} Q_{i}( \mathbf{x}) \ .
$$
It is easy to check that $M_{0}, M_{1} <\infty$. 

\medskip
We now use these estimates to deduce a bound for $P^{h}V$ for fixed $h>0$.
Let $S = \inf\{n, \tau_n >h\}$, and define $\hat \tau_n = \min\{\tau_n, \tau_{S-1}\}$.
Then
$$
 P^{h}V(\mathbf{x}) = \lim_{n\to \infty} \mathbb{E}[ V( \mathbf{x}_{\hat \tau_n^+})
 {\bf 1}_{S \le n+1} \,|\, \mathbf{x}_0 =\mathbf x ]
\le  \lim_{n\to \infty} \mathbb{E}[ V( \mathbf{x}_{\hat \tau_n^+})
 \,|\, \mathbf{x}_0 =\mathbf x ]\ .
$$
We will prove a uniform bound for 
$\mathbb{E}[ V( \mathbf{x}_{\hat \tau_n^+}) \,|\, \mathbf{x}_0 =\mathbf x ]$
for all $n \ge 1$.

First, assuming the worse of (\ref{V1}) and (\ref{V2}), we have 
\begin{equation} \label{V3}
\mathbb{E}[V( \mathbf{x}_{\tau_{n+1}^+}) \,|\, \tau_{n+1} \leq h] \ \le \ 
E[V(\mathbf{x}_{\tau_n^+})\,|\, \tau_{n+1} \leq h] + M_0 + M_1
\end{equation}
for every $n \ge 0$. Notice that conditioning on $\tau_{n+1}\leq h$ does not
affect the bounds in (\ref{V1}) and (\ref{V2}) because given 
$\mathbf{x}_{\tau_n^+}$, 
$\mathbf{x}_{\tau_{n+1}^+}$ is independent of $\tau_{n+1}-\tau_n$.
Second, as $\sum_{i = 1}^{m} \sqrt{x^{i}} \le \sqrt{m \bar{E}}$ for all $\mathbf x \in
\bf \Delta$, we have, for  every
$\mathbf{x}_{\tau_{n}^+} \in \bf \Delta$,
$$
\mathbb{P}[\tau_{n+1} \leq h\,|\, \mathbf{x}_{\tau_{n}^+}, \tau_n \leq h]
\leq \left (1 - e^{-h\sqrt{m \bar{E}}} \right ) \ 
\mathbb{P}[ \tau_n \leq h] \ ,
$$
so that inductively,
\begin{equation} \label{V4}
  \mathbb{P}[ \tau_{n+1} \leq h \,|\, \mathbf{x}_0] \leq \left (1 -
e^{-h\sqrt{m \bar{E}}} \right )^{n+1} \qquad \mbox{ for } \quad n \geq 0\ .
\end{equation}

The estimates (\ref{V3}) and (\ref{V4}) together imply the following: 
Given $\mathbf{x}_0=\mathbf{x}$, 
\begin{eqnarray*} 
& \ & \mathbb{E}[ V( \mathbf{x}_{\hat{\tau}_{n+1}^+})] \\ 
& = & \mathbb{E}[ V( \mathbf{x}_{\hat{\tau}_{n+1}^+}) \,|\, \tau_{n+1}>h] 
\cdot \mathbb P[\tau_{n+1}>h] + 
\mathbb{E}[ V( \mathbf{x}_{\hat{\tau}_{n+1}^+}) \,|\, \tau_{n+1}\leq h] 
\cdot \mathbb P[\tau_{n+1}\leq h]\\
& \le & \mathbb{E}[ V( \mathbf{x}_{\hat{\tau}_{n}^+}) \,|\, \tau_{n+1}>h] 
\cdot \mathbb P[\tau_{n+1}>h] \\
& & \hskip 1cm + 
\left(\mathbb{E}[ V( \mathbf{x}_{\hat{\tau}_{n}^+}) \,|\,
  \tau_{n+1}\leq h] +
M_0 +M_1\right) \cdot \mathbb P[\tau_{n+1} \leq h]\\
& \le & \mathbb{E}[ V( \mathbf{x}_{\hat{\tau}_n^+})] + 
(M_0 +M_1) (1-e^{-h\sqrt{m \bar{E}}})^{n+1}\ .
\end{eqnarray*}
Summing over $n$, this gives
$$
  P^{h}V( \mathbf{x}) \leq \mathbb{E}[V( \mathbf{x}_{\hat{\tau}_{1}^+}) \,|\,
\mathbf{x}_0=\mathbf x]  + \frac{M_{0} + M_{1}}{e^{-h\sqrt{m \bar{E}}}} \,.
$$

Let $h>0$ be small enough so that for all $\mathbf x \in \bf \Delta$, 
$$
 \mathbb{P}[ \tau_{1} \leq h \,|\, \mathbf{x}_0 = \mathbf x] = 1 - e^{-h \sum_{i = 1}^{m} \sqrt{x^{i}}} > \frac{h}{2} \sum_{i = 1}^{m} \sqrt{x^{i}}\ .
 $$
This is the only condition we impose on $h$. 

We choose $M'$ large enough so that $\{V>M'\} \subset B$, and
consider $\mathbf{x}_0 \in \{V>M'\}$. 
Noting again that $\mathbb{E}[V( \mathbf{x}_{{\tau}_{1}^+}) ]$ is independent of
$\tau_1$, we have, by Lemma 2.2, 
\begin{eqnarray*} 
\mathbb{E}[V( \mathbf{x}_{\hat{\tau}_{1}^+}) ] & = &
\mathbb{E}[V( \mathbf{x}_{\tau_{1}^+}) \,|\, \tau_1 \leq h] \cdot \mathbb P[\tau_1 \leq h] + 
V(\mathbf x_0) \cdot \mathbb P[\tau_1 > h]\\
& \le & \left(V(\mathbf{x}_0) - \frac{c^*}{\sum \sqrt{x^i}} V(\mathbf{x}_0)^\beta \right) \cdot \mathbb P[\tau_1 \leq h]
 + V(\mathbf{x}_0) \cdot \mathbb P[\tau_1 > h]\\
& \le & V(\mathbf{x}_0) - c^* \frac{h}{2} V(\mathbf{x}_0)^\beta\ .
\end{eqnarray*}
This gives
$$
P^{h}V( \mathbf{x}) \leq V(\mathbf{x}) - c^* \frac{h}{2} V(\mathbf{x})^\beta
+ (M_0+M_1) e^{h\sqrt{m\bar E}}\ .
$$
To complete the proof of Theorem 2.1, it suffices to replace $M'$ by 
a large enough number $M$ so that for $\mathbf x \in \{V>M\}$, the constant 
$(M_0+M_1) e^{h\sqrt{m\bar E}}$ is 
absorbed into $c_0 V(\mathbf x)^\beta$ for $c_0=c^* \frac{h}{4}$. 
\end{proof}

We record for later use the following fact that follows from the proof above:

\begin{cor} 
$$
\sup_{\mathbf{x} \not \in B} P^hV(\mathbf{x}) \ < \ \infty
$$
\end{cor}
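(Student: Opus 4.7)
The plan is to extract the claim directly from what the proof of Theorem 2.1 already establishes, since nearly all of its estimates are valid for an arbitrary starting point $\mathbf{x}\in\mathbf{\Delta}$ and the restriction $\mathbf{x}\in\{V>M\}\subset B$ was invoked only at the very last step. Specifically, the proof produced the general inequality
$$P^{h}V(\mathbf{x}) \ \le\ \mathbb{E}[V(\mathbf{x}_{\hat{\tau}_{1}^+}) \,|\, \mathbf{x}_0 = \mathbf{x}] \ +\ \frac{M_{0}+M_{1}}{e^{-h\sqrt{m\bar{E}}}}$$
for every $\mathbf{x}\in\mathbf{\Delta}$. The derivation of this inequality relied only on the recursion (V3) and the tail estimate (V4), together with the finiteness of $M_{0}$ and $M_{1}$; none of these ingredients requires $\mathbf{x}_0 \in B$.

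The remaining step is to bound the first term on the right-hand side uniformly for $\mathbf{x}\not\in B$. I would split according to whether $\tau_{1}>h$ or $\tau_{1}\le h$. On $\{\tau_{1}>h\}$, the definition of $\hat\tau_{1}$ gives $\hat\tau_{1}=\tau_{0}=0$, hence $V(\mathbf{x}_{\hat{\tau}_{1}^+}) = V(\mathbf{x}) \le M_{0}$. On $\{\tau_{1}\le h\}$, $\hat\tau_{1}=\tau_{1}$, and the bound (V2) — which is exactly what the constants $M_{0}$ and $M_{1}$ were introduced to express — yields $\mathbb{E}[V(\mathbf{x}_{\tau_{1}^+})\,|\,\mathbf{x}_0=\mathbf{x}]\le M_{0}+M_{1}$. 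Combining the two cases and averaging gives $\mathbb{E}[V(\mathbf{x}_{\hat{\tau}_{1}^+})\,|\,\mathbf{x}_0=\mathbf{x}]\le M_{0}+M_{1}$ for every $\mathbf{x}\in\mathbf{\Delta}\setminus B$. Substituting into the displayed inequality above yields
$$\sup_{\mathbf{x}\not\in B} P^{h}V(\mathbf{x}) \ \le\ (M_{0}+M_{1})\bigl(1+e^{h\sqrt{m\bar{E}}}\bigr) \ <\ \infty,$$
which is the claim.

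There is no real obstacle here: the corollary is essentially a bookkeeping observation that the proof of Theorem 2.1 was stronger than advertised. The only subtlety to double-check is that the inductive step producing (V3) is uniform in the starting point — in particular, that the constants $M_{0}$ and $M_{1}$ arising from (V2) remain valid whenever the trajectory revisits $\mathbf{\Delta}\setminus B$ at an intermediate time — which is immediate from their definition as suprema over $\mathbf{\Delta}\setminus B$.
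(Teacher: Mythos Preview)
Your argument is correct and is exactly the extraction the paper intends when it says the corollary ``follows from the proof above'': you reuse the general bound $P^hV(\mathbf{x}) \le \mathbb{E}[V(\mathbf{x}_{\hat\tau_1^+})\,|\,\mathbf{x}_0=\mathbf{x}] + (M_0+M_1)e^{h\sqrt{m\bar E}}$, then for $\mathbf{x}\notin B$ bound the first term by $M_0+M_1$ via the split on $\{\tau_1>h\}$ (where $V(\mathbf{x})\le M_0$) and $\{\tau_1\le h\}$ (where (V2) applies, using the independence of $\mathbf{x}_{\tau_1^+}$ from $\tau_1$). The paper gives no separate proof, so you have simply made explicit what was left implicit.
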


\medskip

\section{Completing the proofs}

After some preliminaries in Sect. 3.1, we proceed to the main
task of this section, the deduction of Theorem 3 from 
the Lyapunov function introduced. Two proofs are given, one in
Sects. 3.2 and 3.3 and the other in Sect. 3.4. Proofs of Theorems 2, 4 and 5 follow
quickly once Theorem 3 is proved.

\subsection{Existence and uniqueness of invariant measure} \ 

\medskip

\begin{proof}[Proof of Proposition 1]  Let $\pi$ be the probability measure with
density $\rho(x^1, \dots, x^m, y)$ $ =\frac{1}{Z} y^{-1/2}$. 
To prove $\pi = \pi P^{\xi}$ for $\xi \ll 1$,
it suffices to fix an arbitrary state $\mathbf{\bar x} = (\bar x^{1}, \cdots, \bar x^{m}, \bar y)
\in \mathbf{\Delta}$, let
$$
  D = D( \mathbf{\bar x}, \epsilon) = \{ \mathbf{x} \in \mathbf{\Delta} \,|\, 
  |x^i-\bar x^i|, |y-\bar y| < \epsilon \ \forall i \}
$$
for $\epsilon>0$ arbitrarily small, and show that 
$$
\mathbb{P}_\pi[\mathbf{x}_0 \in D, E] = \mathbb{P}_\pi [\mathbf{x}_\xi \in D, E]
+ O(\xi^{2})
$$
where $E$ is the event that exactly one interaction takes place on the interval $(0,\xi)$.
Clearly,
$$
\mathbb{P}_\pi[\mathbf{x}_0 \in {D}, E] = 
\xi (2\epsilon)^{m} \left(\sum_{i=1}^m \sqrt{\bar{x}^{i}} \right) \cdot Z^{-1}
  \bar{y}^{-1/2} + O(\xi^{2} \epsilon^m + \xi \epsilon^{m+1} ) \,.
$$

The estimation of $\mathbb{P}_\pi [\mathbf{x}_\xi \in D
, E]$ requires 
a straightforward computation identical to that in Lemma 6.6 
of \cite{li2014nonequilibrium}. 
\end{proof}

\smallskip
To prove uniqueness, we prove Doeblin's condition on a subset of $\bf \Delta$,
which for convenience we take to be a set of ``active states" of the form
$$
A_\epsilon :=\{\mathbf{x } \in \mathbf{\Delta} \,|\, x^i, y \ge \epsilon\}
$$
for some $\epsilon>0$. For $S \subset \bf \Delta$, let $U_{S}$ denote the uniform probability measure on $S$.

\smallskip
\begin{pro}
\label{uniformref} 
For any $t > 0$ and $\epsilon>0$, there exists a constant $\eta=\eta(\epsilon, t)$ 
such that for every 
$\mathbf{x} \in A=A_\epsilon$,
$$
  P^{t}(\mathbf{x}, \cdot) \geq \eta U_{A}(\cdot)\ .
$$
\end{pro}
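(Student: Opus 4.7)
Proof plan: The cleanest route is to build the Doeblin minorization from one carefully chosen event, namely that the process performs exactly $N$ prescribed jumps on $[0,t/2]$ and then stays put on $(t/2,t]$. The decisive feature of $A=A_\epsilon$ is that every coordinate lies in $[\epsilon,\bar E]$, so each rate $\sqrt{x^i}$ lies in $[\sqrt{\epsilon},\sqrt{\bar E}]$ and the per-particle selection probabilities $\sqrt{x^i}/\sum_j\sqrt{x^j}$ are uniformly bounded below throughout any trajectory that stays in $A_{\epsilon/2}$. Hence the structure of Poisson timing is uniformly controlled as long as we can force intermediate states to remain in $A_{\epsilon/2}$.

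First I would fix an integer $N$ (I expect $N=2m$ to suffice) and a deterministic pattern of particle indices $i_1,\dots,i_N$ in which each particle is selected at least twice. On the event that the process makes exactly these $N$ jumps in $[0,t/2]$ with parameters $u_1,\dots,u_N$ confined to a compact cylinder $K\subset(0,1)^N$ whose image under the iterated update rule \eqref{exch} keeps every intermediate state in $A_{\epsilon/2}$, and then no jump occurs on $(t/2,t]$, the probability of this event is bounded below by a constant $c_1=c_1(\epsilon,t,N)>0$ uniformly in $\mathbf{x}\in A$, by standard estimates on Poisson holding times together with the uniform rate bounds above.

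Second, I would treat the final state as the image of the smooth map $\Phi_{\mathbf{x}}:K\to\mathbf{\Delta}$ obtained by composing the $N$ one-parameter updates \eqref{exch}. The goal is to show that, for every target $\mathbf{x}^*\in A$, there is a preimage $\mathbf{u}^*$ lying in the interior of $K$ (uniformly away from $\partial[0,1]^N$ as $\mathbf{x},\mathbf{x}^*$ range over $A$), and that the Jacobian of $\Phi_{\mathbf{x}}$ (restricted to the simplex) is uniformly bounded above and away from zero on this preimage. The Jacobian inherits a sparse, staircase structure from \eqref{exch} — at the $k$-th step, $u_k$ affects only the tank coordinate and the currently selected particle — so the determinant factors essentially as a product of terms $u_k$ times intermediate energies, all of which remain uniformly bounded on $K$. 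The change-of-variables formula then shows that the conditional distribution of $\mathbf{x}_{t/2}$ given the event above dominates $\eta_0 U_A$ for some $\eta_0>0$, and multiplying by $c_1$ gives the desired minorization with $\eta=c_1\eta_0$.

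The main obstacle is the controllability part of Step 2: exhibiting, for every pair $\mathbf{x},\mathbf{x}^*\in A$, parameters $(u_1,\dots,u_N)$ in the interior of $(0,1)^N$ with $\Phi_{\mathbf{x}}(\mathbf{u})=\mathbf{x}^*$, with all intermediate states kept in $A_{\epsilon/2}$ and all $u_k$ uniformly away from $\{0,1\}$. Intuitively, since each jump is a full one-parameter mixing of the tank with one particle, $2m$ steps should leave enough freedom to reset every coordinate independently (modulo the energy constraint), but the uniform estimate has to be carried out explicitly by inverting the iterated \eqref{exch} and matching it with the Jacobian bound.
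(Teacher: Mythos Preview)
Your approach and the paper's share the same architecture --- prescribe a finite jump pattern whose timing and selection probabilities are bounded below uniformly over $A$, and push the law of the $u$-parameters through the iterated update \eqref{exch} --- but the paper organizes the construction so that the controllability step you flag as the main obstacle becomes essentially trivial. Rather than a single block of $N$ jumps followed by quiescence, the paper uses two rounds of $2m$ jumps each (so $N=4m$ in total). In the first round, on $(0,\tfrac{t}{2})$, each particle interacts twice in succession with the tank, the goal being to land every particle energy in a fixed narrow window $(\tfrac{2\epsilon}{5},\tfrac{3\epsilon}{5})$ regardless of the starting point $\mathbf{x}$; the double interaction is forced precisely by the constraint $x^i_{s^+}\ge y_s$ built into \eqref{exch}. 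After this ``reset'' the state is, up to a small tolerance, independent of $\mathbf{x}$, so uniformity over $A$ is automatic. In the second round, on $(\tfrac{t}{2},t)$, each particle again interacts twice, and the second of the two places $x^i_t$ into a prescribed interval $[\bar x^i-\xi,\bar x^i+\xi]$ via a fresh uniform variable, independently of the coordinates already set; covering $A$ by finitely many such boxes $D(\bar{\mathbf{x}},\xi)$ then yields the minorization with no multivariate Jacobian computation at all. Your direct inversion of $\Phi_{\mathbf{x}}$ would also succeed once $N$ is taken large enough, but the same constraint $x^i_{s^+}\ge y_s$ makes it doubtful that $N=2m$ leaves enough room to hit every target in $A$ while keeping all intermediate states in $A_{\epsilon/2}$ and all $u_k$ uniformly away from $\{0,1\}$; the paper's normalization phase is exactly what purchases that uniformity without having to invert the composite map.
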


\smallskip
\begin{proof} We cover $A$ with finitely many sets of the form 
$D=D( \mathbf{\bar x}, \xi)$ where $D( \mathbf{\bar x}, \xi)$
is as defined in the proof of Proposition 1 with the property that 
dist$(D, \partial {\bf \Delta}) > \frac{4\epsilon}{5}$. It suffices to show that
given any $t>0$, there exists $\eta>0$ such that for every 
$\mathbf{x} \in  A$, $P^{t}(\mathbf{x}, \cdot) \geq \eta U_D(\cdot)$
for all the $D$ in this cover. There are many ways to arrive at this outcome;
below we describe one possible scenario.

Let $\mathbf{x}$ and $D$ be fixed. There will be two rounds of interactions. 
The first round,
which takes place on the time interval $(0, \frac{t}{2})$, will result in most
of the energy collecting in the tank; and in the second round, which takes place
on $(\frac{t}{2},t)$, energy is redistributed according to $D$. In more detail,
starting from $\mathbf{x}$, the first round consists of particle 1 
interacting twice with the tank in quick succession, followed by particle 2,
and so on through particle $m$, with no other interactions besides these.
For each $i$, the goal of the second interaction is to result in 
$x^i_{\frac{t}{2}} \in (\frac{2\epsilon}{5} , \frac{3\epsilon }{5})$.
This requires two interactions to achieve because after the first
interaction, $x^i_{s^+} \ge y_s$ (see (\ref{exch})), and tank energy
prior to interaction with each particle is $ \ge \epsilon$. 
In the second round,
each particle interacts twice with the tank as before, resulting in
$x^i_t \in [\bar x^i-\xi, \bar x^i + \xi]$
uniformly distributed and independent of $x^j_t$ for $j=1,2,\dots, i-1$.

We leave it to the reader to check that the scenario above occurs with 
probability $\eta>0$
independent of $\mathbf{x}$ provided $\mathbf{x} \in A$.
\end{proof}

\smallskip
\begin{proof}[Proof of Theorem 1]  Let $A=A_\epsilon$ and $t$ be as above.
It is obvious that for any $\mathbf{x} \in {\bf \Delta}$, 
$P^{t/2}(\mathbf{x}, A)>0$. Together with Proposition 3.1, this implies that 
$P^{t}(\mathbf{x}, \cdot)$ has a strictly positive density on all of $A$, and 
that in turn implies that all $\mathbf{x} \in {\bf \Delta}$ belong in the same ergodic component, equivalently, $\mathbf{x}_t$ admits at most one invariant probability measure, 
which must therefore be $\pi$.
\end{proof}

\medskip

\subsection{Review of tools from probability} \ 

\medskip
We recall here some tools that we will use to prove polynomial convergence.
As these are very general ideas, we will present them in the context of general Markov chains.
Let $\Psi_{n}$ be a (discrete-time) Markov chain on a measurable space
$(X, \mathcal{B})$ with transition kernels $\mathcal{P}(x, \cdot)$.

\bigskip \noindent
{\bf (A) Atoms of Markov chains.} A set $\alpha \in \mathcal{B}$ is called an {\it atom} 
if there is a probability measure $\theta$ on $(X, \mathcal{B})$ such that for all
$x \in \alpha$, $\mathcal{P}(x, \cdot) = \theta(\cdot)$. Most Markov chains on  
continuous or uncountable spaces do not possess atoms. 
We review here a technique introduced in \cite{nummelin1978splitting} which shows 
 that under
quite general conditions for $\Psi_n$, one can construct explicitly another chain,
$\tilde \Psi_n$, defined on an enlarged state space $(\tilde X, \mathcal{\tilde B})$,
such that  $\tilde \Psi_n$ is an extension of $\Psi_n$ and it has an atom. 

The relevant condition for $\Psi_n$ is that for some set $A_0 \in \mathcal{B}$, 
there exists a probability measure $\theta$ and a number $\eta>0$ such that
for every $x \in A_0$, $\mathcal{P}(x, \cdot) \ge \eta \theta(\cdot)$.
Let us call a set $A_{0}$ with this property a {\it special reference set}. 
  Assuming the existence of such an $A_0$, the splitting technique
of \cite{nummelin1978splitting} is as follows: Let $\tilde X = X \cup A_1$ (disjoint union) where $A_1$ is an
identical copy of $A_0$, with the obvious extension $\mathcal{\tilde B}$ of $\mathcal{B}$
to $\tilde X$. First we define the ``lift" of a measure $\mu$ on $(X, \mathcal{B})$ to 
a measure $\mu^*$ on $(\tilde X, \mathcal{\tilde B})$:
$$
  \left \{
\begin{array}{ll}
 & \mu^*|_X = (1 - \eta)\ \mu|_{A_0} + \mu|_{X \setminus A_0}\\
&  \mu^*|_{A_1} = \eta \ \mu|_{A_0}\ , \quad A_0 \cong A_1  \mbox{ via the natural
identification }.
\end{array}
\right .
$$
The transition kernels $\mathcal{\tilde P}(x, \cdot)$ are then given by
$$
  \left \{ 
\begin{array}{cl}
 \mathcal{\tilde P}(x, \cdot) = (\mathcal{P}(x, \cdot))^* &  x \in X \setminus A_0\\
 \mathcal{\tilde P}(\mathbf{x}, \cdot) = [(\mathcal{P}(x, \cdot))^{*}
 - \eta {\theta^{*}(\cdot)}]/(1 - \eta) 
 & x \in A_0 \\
\mathcal{\tilde P}( x, \cdot) = {\theta^{*}}(\cdot) & x \in A_1
\end{array}
\right .
$$
It is straightforward to check that the chain $\tilde \Psi_n$ projects to 
$\Psi_n$, meaning $(\mu \mathcal{P})^* = \mu^* \mathcal{\tilde P}$, so that
 $\|\mu \mathcal{P}^n - \nu \mathcal{P}^n\|_{\rm TV} \le 
\| \mu^* \mathcal{\tilde P}^n - \nu^* \mathcal{\tilde P}^n\|_{\rm TV}$.
Finally, $A_1$ is an atom for the chain $\tilde \Psi_n$ ---
this is the whole point of the construction.

\bigskip \noindent
{\bf (B) Connection to renewal processes.} For $E \in \mathcal{B}$, we let $\tau_E$
denote the first passage time to $E$, i.e.,
$$
\tau_E = \inf \{n>0 | \Psi_n \in E\}\ .
$$
Suppose the chain $\Psi_n$ has an atom $\alpha$, and that $\alpha$ is accessible,
 i.e.,
$\mathcal{P}_x [\tau_\alpha < \infty]=1$ for every $x \in X$. 
Given two initial distributions $\mu$ 
and $\nu$ on $X$, we wish to bound the rate at which
$\| \mu \mathcal{P}^n - \lambda \mathcal{P}^n\|_{\rm TV}$ tends to $0$
as $n \to \infty$ where $\|\cdot\|_{\rm TV}$ is the total variational norm.
One way to proceed is to run two independent copies of the chain
with initial distributions $\mu$ and $\nu$ respectively, and perform a coupling at
simultaneous returns to the atom $\alpha$. It is well known that if $T$ is the coupling time, then
\begin{equation} \label{eqcoupling}
\| \mu \mathcal{P}^n - \nu \mathcal{P}^n\|_{\rm TV} \le 2 \ \mathbb{P}[T>n]\ .
\end{equation}
The quantities $\mathbb{P}[T>n]$, on the other hand, can be studied via two associated
renewal processes as follows: 

Let $Y_0$ and $Y'_0$  be independent
$\mathbb N$-valued random variables having the distributions of $\tau_\alpha$,
the first passage time to $\alpha$, starting from 
$\mu$ and $\nu$ respectively, and let $Y_1, Y_2, \dots$ and $Y'_1, Y'_2, \dots$ 
be {\it i.i.d.} random variables the distributions of which are equal to that of 
$\tau_\alpha$ starting from $\alpha$. In addition, we assume the {\it return times}
to $\alpha$ are {\it aperiodic}, i.e.,
  $\mathrm{gcd}\{ n \ge 1 \,|\, \mathbb{P}[ Y_{i} = n ] > 0 \} = 1$. Then
$S_{n} := \sum_{i = 0}^{n}Y_{i}$ 
and $S'_{n} := \sum_{i = 0}^{n} Y'_{i}$, $n=0,1,2,\dots$, are renewal processes,
and $T$ above is the first simultaneous renewal time, i.e.
$$
  T = \inf_{n \geq 0} \{ S_{k_{1}} = S'_{k_{2}} = n \mbox{ for some }
  k_{1}, k_{2} \} \ .
$$
The following known result relates the finiteness of the moments of $T$ to
the corresponding moments for the distributions of $Y_0, Y'_0$ and $Y_1$:

\begin{thm}   
\label{coupling}
(Theorem 4.2 of \cite{lindvall2002lectures}) Let $Y_i$ and $Y'_i$ be as above.
Suppose that for some $\beta \ge 1$, we have
\begin{equation}\label{moments}
\mathbb{E}[Y_{0}^{\beta}], \quad \mathbb{E}[Y_{0}'^{\beta}]
\quad \mbox{ and } \quad \mathbb{E}[ Y_{1}^{\beta}] < \infty\ .
\end{equation}
Then $\mathbb{E}[T^{\beta}]$ is also finite.
\end{thm}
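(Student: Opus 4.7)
The plan is to establish $\mathbb{E}[T^{\beta}] < \infty$ by reinterpreting $T$ as the first hitting time of a two-dimensional Markov chain and then invoking a polynomial-moment Foster--Lyapunov criterion under the aperiodicity hypothesis.

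First I would set up the bivariate forward-recurrence chain $Z_n = (R_n, R'_n)$ on $\mathbb{N}_0 \times \mathbb{N}_0$, where $R_n$ is the number of steps until the first renewal of $\{S_k\}$ at time $n$ or later, and $R'_n$ is the analogous quantity for $\{S'_k\}$. This is a genuine Markov chain: when $R_n, R'_n > 0$, both coordinates decrement by one; when a coordinate hits $0$, it is refreshed by an independent draw from the law of $Y_1$ (respectively $Y'_1$, distributionally identical after the initial waits). The stopping time $T$ coincides with the first $n \geq 0$ at which $Z_n = (0,0)$. Since by Minkowski's inequality
$$
\|T\|_{\beta} \;\leq\; \|Y_0\|_{\beta} + \|Y'_0\|_{\beta} + \|\tau_{(0,0)}\|_{\beta},
$$
where $\tau_{(0,0)}$ is the hitting time of $(0,0)$ for the chain started with both coordinates refreshed, the hypothesis on $Y_0, Y'_0$ reduces matters to controlling $\mathbb{E}[\tau_{(0,0)}^{\beta}]$ from generic initial states whose $\beta$-moment is dominated by $\mathbb{E}[Y_1^{\beta}]$.

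Next, I would exploit aperiodicity: since $\gcd\{n : \mathbb{P}[Y_1 = n] > 0\} = 1$, there exist integers $a_1, a_2$ giving the aperiodic structure, and the signed-difference random walk $W_k := S_k - S'_k$ is an irreducible, zero-mean random walk on $\mathbb{Z}$ with step distribution $Y_1 - Y'_1$ whose $\beta$-th moment is controlled by $\mathbb{E}[Y_1^{\beta}]$ via a standard $(a+b)^{\beta} \leq 2^{\beta-1}(a^{\beta}+b^{\beta})$ bound. The chain $Z_n$ visits $(0,0)$ a.s., and to get moment control I would construct a Lyapunov function of the form
$$
V(r, r') \;=\; \bigl(r + r' + |r - r'|^{\gamma}\bigr)^{\beta} + C,
$$
with $\gamma$ tuned so that the one-step drift satisfies
$$
\mathbb{E}\bigl[V(Z_{n+1}) - V(Z_n) \,\big|\, Z_n = z\bigr] \;\leq\; -c\, V(z)^{1 - 1/\beta}
$$
outside a finite set. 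The polynomial Foster--Lyapunov theorem of Jarner--Roberts (or Douc--Fort--Moulines--Soulier) then yields $\mathbb{E}[\tau_{(0,0)}^{\beta}] < \infty$.

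The main obstacle will be verifying the drift inequality with the correct exponent, since the increments of the difference walk $W_k$ only have $\beta$ moments and a naive Taylor expansion of $V$ produces terms of the wrong order. A workable route is to combine a Mineka-style coupling, which decomposes $Y_1$ into a small deterministic step plus an independent residual, with a martingale-type estimate for the hitting time of $W_k$ at the origin: classical random-walk results then give $\mathbb{E}[\tau_W^{\beta}] \leq C\,(1 + \mathbb{E}[|W_0|^{\beta}])$, and a short additional argument upgrades this to the simultaneous-renewal event $Z_n = (0,0)$ by running the chain for a further geometric number of aperiodic refresh steps. Assembling these pieces and invoking Minkowski on the initial waits $Y_0, Y'_0$ completes the proof.
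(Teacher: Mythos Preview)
The paper does not supply its own proof of this statement: it is quoted as Theorem~4.2 of Lindvall's \emph{Lectures on the Coupling Method} and used as a black box in the corollary that follows. So there is no in-paper argument to compare against, and the only question is whether your sketch can stand on its own.

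The bivariate forward-recurrence chain and the identification of $T$ with the hitting time of $(0,0)$ are fine, but two steps do not work as written. First, the Minkowski line $\|T\|_{\beta}\le \|Y_0\|_{\beta}+\|Y_0'\|_{\beta}+\|\tau_{(0,0)}\|_{\beta}$ is not a valid decomposition: $T$ \emph{is} the hitting time of $(0,0)$ from the random initial state $(Y_0,Y_0')$, not a sum of three independent waiting times; what you actually need is a pointwise bound of the form $\mathbb{E}_{(r,r')}[\tau_{(0,0)}^{\beta}]\le C(1+r^{\beta}+(r')^{\beta})$, which can then be integrated against the law of $(Y_0,Y_0')$. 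Second, the proposed Lyapunov function is not shown to satisfy the drift inequality, and in general it will not. On the axis $\{r=0\}$ the first coordinate refreshes to a fresh $Y_1-1$, so the $r+r'$ part of the argument has expected increment $\mathbb{E}[Y_1]-2$, which need not be negative; letting the $|r-r'|^{\gamma}$ term dominate forces you to control fluctuations of order $Y_1$ inside a $\beta$-th power, and with only $\beta$-th moments of $Y_1$ available the Taylor expansion is exactly the obstruction you yourself flag in the ``main obstacle'' paragraph. The Mineka coupling plus difference-walk hitting-time estimate you invoke at the end is indeed the classical route to this result, but it \emph{replaces} the Foster--Lyapunov step rather than repairs it, and would have to be carried out in full (random-walk overshoot bounds, aperiodic lift to the simultaneous-renewal event) to constitute a proof.
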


The discussion above implies the following:

\begin{cor} Let $\Psi_{n}$ be a Markov chain on $(X, \mathcal{B})$
with transition kernel $\mathcal{P}$. Suppose $\Psi_n$ has an atom $\alpha$ that is accessible and whose
return times are aperiodic. Let $\mu$ and $\nu$ be two probability distributions
on $X$, and assume that for some $\beta > 1$, 
$$
\mathbb E_\mu[\tau^\beta_\alpha], \ \ \mathbb E_\nu[\tau^\beta_\alpha]
\ \ \mbox{ and } \ \ \mathbb E_\alpha[\tau^\beta_\alpha] \ < \infty.
$$
Then 
$$
\lim_{n\to \infty}  n^\beta \| \mu \mathcal{P}^n - \nu \mathcal{P}^n\|_{\rm TV} = 0\ .
$$
\end{cor}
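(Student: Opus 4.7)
The plan is to chain together the coupling inequality displayed as equation~(\ref{eqcoupling}), Theorem~\ref{coupling}, and a Markov-type tail estimate. The setup in the statement is exactly the one described in part (B) of Sect.~3.2, so I would start by running two independent copies of $\Psi_n$ with initial distributions $\mu$ and $\nu$, let $T$ be their first simultaneous return to the atom $\alpha$, and invoke~(\ref{eqcoupling}) to reduce the goal to showing that $n^{\beta}\,\mathbb{P}[T>n]\to 0$ as $n\to\infty$.

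Next I would verify that Theorem~\ref{coupling} applies. The $Y_i$'s and $Y_i'$'s in that theorem are by construction distributed as $\tau_\alpha$ started from $\mu$, $\nu$, and $\alpha$ respectively, so the hypothesis~(\ref{moments}) is precisely the assumed finiteness of $\mathbb{E}_\mu[\tau_\alpha^\beta]$, $\mathbb{E}_\nu[\tau_\alpha^\beta]$, and $\mathbb{E}_\alpha[\tau_\alpha^\beta]$. Accessibility of $\alpha$ makes all first passage times almost surely finite, and aperiodicity of return times to $\alpha$ is assumed outright; this lets us invoke Theorem~\ref{coupling} to conclude $\mathbb{E}[T^{\beta}]<\infty$.

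The last step is to upgrade the crude Markov inequality $\mathbb{P}[T>n]\le \mathbb{E}[T^{\beta}]/n^{\beta}$, which only yields $O(n^{-\beta})$ decay, to the claimed little-$o$ rate. For this I would use
\[
  n^{\beta}\,\mathbb{P}[T>n] \ \le\ \mathbb{E}\!\left[T^{\beta}\,\mathbf{1}_{\{T>n\}}\right],
\]
and the right-hand side tends to $0$ as $n\to\infty$ by dominated convergence since $T^{\beta}$ is integrable. Combining this with~(\ref{eqcoupling}) yields the conclusion $\lim_{n\to\infty} n^{\beta}\|\mu\mathcal{P}^{n}-\nu\mathcal{P}^{n}\|_{\rm TV}=0$.

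I do not expect a genuine obstacle: all of the real work was done in Theorem~\ref{coupling} and in the splitting construction of part (A) of Sect.~3.2, and the corollary is essentially a bookkeeping exercise. The one mild subtlety worth flagging is the passage from $O(n^{-\beta})$ to $o(n^{-\beta})$, which is what forces the use of dominated convergence rather than a one-line Chebyshev bound.
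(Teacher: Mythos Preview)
Your proposal is correct and follows essentially the same route as the paper: reduce via the coupling inequality~(\ref{eqcoupling}), apply Theorem~\ref{coupling} to get $\mathbb{E}[T^\beta]<\infty$, and then use the general tail fact $\mathbb{E}[Z^\beta]<\infty \Rightarrow n^\beta\,\mathbb{P}[Z>n]\to 0$. The paper simply states this last implication as a known relation, whereas you supply the one-line dominated convergence argument for it; otherwise the arguments coincide.
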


The proof is as discussed, together with the following general relation:  
Let $Z$ be a random variable taking values in 
$\mathbb{N}$, and let $\beta>1$. Then
\begin{equation}
  \label{moments}
\mathbb{E}[Z^\beta]<\infty \  \implies \ 
\lim_{n\to \infty} n^\beta \mathbb{P}[Z>n] = 0\ .
\end{equation}

\bigskip \noindent
{\bf (C) Lyapunov function and moments of first passage times.} 
The following result, which is sufficient for our purposes,
 is a simple version of Theorem 3.6 of
\cite{jarner2002polynomial}:

\begin{thm} (Theorem 3.6 of \cite{jarner2002polynomial}) \label{thm36}
Let $\Psi_{n}$ be a Markov chain on $(X, \mathcal{B})$
with transition kernel $\mathcal{P}$. We assume that there exist a
function $W: X \rightarrow [1, \infty)$, a set $A \in \mathcal{B}$, constants 
$b, c > 0$ and $0 \leq \beta < 1$ such that
\begin{equation}
  \label{lyapunov1}
  \mathcal{P}W - W \leq - c W^{\beta} + b \mathbf{1}_{A} \,.
\end{equation}
Then there is a constant $\hat{c}$ such that for all $x \in X$,
$$
  \mathbb{E}_{x} \left [ \sum_{k = 0}^{ \tau_{A} - 1}
    (k+1)^{\hat \beta - 1} \right ] \leq \hat{c} W(x) \ , \qquad \hat \beta= (1 - \beta)^{-1}\ . 
$$
\end{thm}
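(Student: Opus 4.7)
The plan is a Foster--Lyapunov argument tailored to the subgeometric (polynomial) regime. On $\{k<\tau_A\}$ we have $\Psi_k\in A^c$, so (\ref{lyapunov1}) reduces to $\mathbb{E}[W(\Psi_{k+1})\,|\,\mathcal{F}_k]\le W(\Psi_k)-cW(\Psi_k)^{\beta}$, and therefore
$$M_n\ :=\ W(\Psi_{n\wedge\tau_A})\ +\ c\sum_{k=0}^{n\wedge\tau_A-1}W(\Psi_k)^{\beta}$$
is a nonnegative supermartingale. Optional stopping followed by Fatou's lemma gives the baseline
$$\mathbb{E}_{x}\Bigl[\sum_{k=0}^{\tau_A-1}W(\Psi_k)^{\beta}\Bigr]\ \le\ W(x)/c.$$
This controls a \emph{$W$-weighted} sum, not yet a \emph{time-weighted} one.

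The second step is a bootstrap via power transforms. For $p\in(1-\beta,1]$, set $V_p:=W^p$. Jensen's inequality applied to the concave map $t\mapsto t^p$, together with the elementary bound $(1-t)^p\le 1-pt$ on $[0,1]$, yields for $x\in A^c$ with $W(x)$ large
$$\mathcal{P}V_p(x)\ \le\ \bigl(\mathcal{P}W(x)\bigr)^p\ \le\ \bigl(W(x)-cW(x)^{\beta}\bigr)^p\ \le\ V_p(x)\,-\,pc\,V_p(x)^{\beta_p},$$
with $\beta_p:=1-(1-\beta)/p\in(0,1)$; crucially $\beta_p\searrow 0$ as $p\searrow 1-\beta$. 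Running the supermartingale argument of Step~1 with $V_p$ in place of $W$ then produces a whole family
$$\mathbb{E}_x\Bigl[\sum_{k=0}^{\tau_A-1}W(\Psi_k)^{p+\beta-1}\Bigr]\ \le\ C_p\,W(x)^{p},\qquad p\in(1-\beta,1],$$
which in the limit $p\searrow 1-\beta$ (where the summand collapses to a constant) specializes to $\mathbb{E}_x[\tau_A]\le CW(x)^{1-\beta}$. This matches the heuristic scaling $\tau_A\sim W^{1-\beta}$ obtained by integrating the deterministic drift rate $W^{\beta}$ from $W(x)$ down to $O(1)$.

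The last step --- and the technical heart of the argument --- is converting the $W$-weighted bounds of Step~2 into the genuine time-weighted bound in the conclusion. A direct Markov inequality from the family gives $\mathbb{P}_x[\tau_A>n]\lesssim W(x)/n^{\hat\beta}$, but inserting this into $\sum_{n\ge 0}(n+1)^{\hat\beta-1}\mathbb{P}_x[\tau_A>n]$ produces a spurious logarithmic factor. The remedy of \cite{jarner2002polynomial} is to use the full family $\{V_p\}$ simultaneously: one constructs a combined energy--time Lyapunov function, essentially tracking $\sum_{k=0}^{n}(k+1)^{\hat\beta-1}$ against a power of $W$, and verifies by a careful Taylor expansion that \emph{it} satisfies a one-step supermartingale inequality off $A$. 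Summing along an excursion then yields $\hat{c}W(x)$ with no logarithmic loss. Verifying this combined inequality --- balancing the polynomial growth of the time weight against the exponent gained from the Taylor expansion in the energy coordinate --- is where all the hard work sits, and is my anticipated main obstacle.
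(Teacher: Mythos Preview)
The paper does not supply a proof of this statement at all: Theorem~\ref{thm36} is quoted verbatim as Theorem~3.6 of \cite{jarner2002polynomial} and used as a black-box tool in the proof of Lemma~3.5. The only commentary the authors add is the remark that some hypotheses in \cite{jarner2002polynomial} have been dropped because they restrict to first passage to the Lyapunov sublevel set $A$ rather than to arbitrary sets. There is therefore nothing in the present paper against which to compare your proposal.

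That said, your sketch is a reasonable summary of the strategy in \cite{jarner2002polynomial}: the power-transform/Jensen step producing the family $V_p=W^p$ with exponents $\beta_p=1-(1-\beta)/p$, and the resulting interpolation between $W$-weighted and time-weighted sums, is indeed the core mechanism there. Your caveat about the ``combined energy--time Lyapunov function'' is slightly off target, though: Jarner--Roberts do not build a single joint supermartingale in $(k,W)$; rather they iterate the $V_p$ drift condition and use a comparison/interpolation lemma (their Lemma~3.5) to pass from the family of $W$-weighted bounds directly to the rate function $r(k)=(k+1)^{\hat\beta-1}$, avoiding the logarithmic loss you anticipate. If you intend to reconstruct the argument, that interpolation lemma is the piece to look at.
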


Clearly, $\mathbb E_x[\tau_A^{\hat \beta}]$ is bounded above by a constant
times the expectation above.

The reader may notice that we have omitted some of the hypotheses in 
Theorem 3.6 of \cite{jarner2002polynomial} in the statement of Theorem \ref{thm36}
above. This is because they are not needed: here we consider only the first passage time
to $A$, which can be thought of as a set of the form $\{W \le {\rm constant}\}$,
while \cite{jarner2002polynomial} considers first passage times to arbitrary sets. We remark also that \cite{jarner2002polynomial} 
does not give the rate of convergence to equilibrium we claim; it shows that in general,
convergence rate is bounded by $\sim t^{\hat \beta - 1}$, 
but as we will see, additional information for our systems enables us to prove 
a faster convergence rate $\sim t^{\hat \beta - 2}$.

\bigskip \noindent
{\bf Remarks:} In {\bf (A)}, {\bf (B)} and {\bf (C)} above, we have outlined a general
strategy for deducing polynomial rates of convergence or of correlation decay
for Markov chains.
While we have cited specific references, they are not the only ones that contributed
to this general body of ideas 
  \cite{hairer2010convergence, nummelin1983rate, douc2009subgeometric,
  tuominen1994subgeometric, douc2004practical, fort2005subgeometric}.
We acknowledge in particular \cite{nummelin1983rate}, which was proved earlier
and which used similar ideas as above though some of the arguments
were carried out a little differently. We mention also \cite{young1999recurrence}, which models
deterministic dynamical systems with chaotic behavior as objects that are slight
generalizations of countable state Markov chains. This paper focuses on
tails of return times, i.e., $\mathbb P[\tau_\alpha > n]$, rather than on moments
of $\tau_\alpha$, to a set $\alpha$ that is effectively a special reference set as 
defined in {\bf (A)}; tails of first passage times
and moments are, as we have noted, essentially equivalent.

\subsection{Proofs of Theorems} \ 

\medskip
We first prove Theorem 3. Theorems 2, 4 and 5 follow easily;
their proofs are given at the end
of the subsection.

\medskip

Let $h>0$ be small enough for Theorem \ref{lyapunov}
to apply, and let
$$
  \hat{\mathbf{x}}_{n} = ( \hat{x}^{1}_{n}, \cdots, \hat{x}^{m}_{n},
 \hat y_{n}) = (x^{1}_{nh}, \cdots, x^{m}_{nh}, y_{nh} )\ , \qquad n = 0, 1, 2, \cdots\ ,
$$
be the time-$h$ sampling chain of $\mathbf{x}_{t}$.
Letting $\lfloor \frac{t}{h} \rfloor$ denote the largest integer $\le \frac{t}{h}$,
we observe that 
$$
\|\mu P^t - \nu P^t\|_{\rm TV} = 
\| (\mu P^{\lfloor  \frac{t}{h} \rfloor h}  - \nu P^{\lfloor
  \frac{t}{h} \rfloor h}) P^{(t-\lfloor  \frac{t}{h} \rfloor) h}\|_{\rm TV}  
\le \|\mu P^{\lfloor  \frac{t}{h} \rfloor h} - \nu P^{\lfloor
  \frac{t}{h} \rfloor h} \|_{\rm TV} \ ,
$$
so it suffices to prove the theorem for $\hat{\mathbf{x}}_{n}$ corresponding to a fixed $h$. From here on, $h$ is fixed, and since we will be working exclusively with the 
discrete-time chain $\hat{\mathbf{x}}_{n}$, the $\hat \ $ in $\hat{\mathbf{x}}_{n}$ is dropped
for notational simplicity.
 
Let $\gamma>0$ be  small enough that Theorem
\ref{lyapunov} applies with $\alpha = \frac12 - \frac{\gamma}{8}$.
We define
$$
  \mathcal{A} = \mathcal{A}_{\gamma, h} = \{ \mathbf{x}\in
  \mathbf{\Delta} \,|\, V_{\frac{1}{2} - \frac{\gamma}{8}}
  (\mathbf{x}) \leq M  \} 
$$
where $M = M( \frac{1}{2} - \frac{\gamma}{8}, h)$, and let
$$
{\tau}_{\mathcal{A}} = \inf_{n > 0} \{ {\mathbf{x}}_{n}
\in \mathcal{A} \}
$$
be the first passage time to $\mathcal{A}$. We plan to proceed as follows:

\noindent
(1) First we estimate the moments of ${\tau}_{\mathcal{A}}$ .

\noindent
(2) Using $\mathcal{A}$ as a special reference set, we split the chain,
obtaining an atom $\alpha$ for 

\ \ the split chain $\tilde{\mathbf{x}}_{n}$. 

\noindent
(3) We deduce from (1) the moments of $\tilde \tau_\alpha$,
the first passage time of $\tilde{\mathbf{x}}_{n}$ to $\alpha$, and 

\noindent
(4) finally, we apply Corollary 3.3 to $\tilde \tau_\alpha$ to obtain the desired results.

\smallskip

\begin{lem}
\label{taildiscrete} Given $\gamma$ as above, there exists $C =
C( \gamma )$ such that for all $\mathbf{x} \in \mathbf{\Delta}$,
$$
   \mathbb{E}_{ {\mathbf{x}}}[
   {\tau}_{\mathcal{A}}^{2 - \frac{\gamma}{2}} ] \leq
 {C} V_{\frac{1}{2} - \frac{\gamma}{8}}( \mathbf{{x}}) \ .
$$
\end{lem}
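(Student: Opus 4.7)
The plan is to set $\alpha = \tfrac12 - \tfrac{\gamma}{8}$, so that $V = V_\alpha$ and $\beta = 1 - (4\alpha)^{-1} = (1 - \gamma/2)/(2 - \gamma/2)$, giving $\hat\beta := (1-\beta)^{-1} = 2 - \gamma/2$. The target exponent $2 - \gamma/2$ is exactly the $\hat\beta$ produced by Theorem \ref{thm36}, so the lemma will be a direct application of that theorem to the time-$h$ chain once I verify its drift hypothesis globally on $\mathbf{\Delta}$ with $A$ replaced by $\mathcal{A}$.

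First I would assemble the Lyapunov inequality in the Jarner-Roberts form. On $\mathcal{A}^c = \{V > M\}$, Theorem \ref{lyapunov} gives $P^h V - V \le -c_0 V^\beta$. On $\mathcal{A}$, the value of $V$ is bounded above by $M$, while $P^h V$ is bounded uniformly: the set $\mathcal{A}$ may intersect the boundary strip $B$, but since $V \le M$ on $\mathcal{A}$, the same reasoning used in the proof of Theorem \ref{lyapunov} (combined with Corollary 2.3 for the part of $\mathcal{A}$ outside $B$) produces a finite constant $K$ with $P^h V \le K$ on $\mathcal{A}$. Thus there exists $b > 0$ such that
\begin{equation*}
P^h V(\mathbf{x}) - V(\mathbf{x}) \le -c_0 V(\mathbf{x})^\beta + b\,\mathbf{1}_{\mathcal{A}}(\mathbf{x}) \qquad \text{for all } \mathbf{x} \in \mathbf{\Delta}.
\end{equation*}
Since $V$ is bounded below by a positive constant on $\mathbf{\Delta}$, I may replace $V$ by $W := V + C_0$ for a suitable $C_0$ so that $W \ge 1$; the drift inequality persists with adjusted constants because $P^h W - W = P^h V - V$ and $W^\beta \asymp V^\beta$ away from the (bounded) minimum of $V$.

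Applying Theorem \ref{thm36} to the discrete-time chain $\mathbf{x}_n = \mathbf{x}_{nh}$ with this $W$, $A = \mathcal{A}$, and exponent $\beta$, I obtain
\begin{equation*}
\mathbb{E}_{\mathbf{x}} \left[\, \sum_{k=0}^{\tau_{\mathcal{A}} - 1} (k+1)^{\hat\beta - 1}\right] \le \hat c\, W(\mathbf{x}).
\end{equation*}
Since $\hat\beta - 1 = 1 - \gamma/2 > 0$ for $\gamma$ small, the summand is monotone increasing in $k$, and a comparison with $\int_0^{n}(s+1)^{\hat\beta - 1}\,ds$ yields $\sum_{k=0}^{n-1}(k+1)^{\hat\beta - 1} \ge \hat\beta^{-1}(n^{\hat\beta} - 1)$. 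Consequently
\begin{equation*}
\mathbb{E}_{\mathbf{x}}[\tau_{\mathcal{A}}^{\,2 - \gamma/2}] \le \hat\beta\,\hat c\, W(\mathbf{x}) + 1 \le C\, V_{\tfrac12 - \gamma/8}(\mathbf{x})
\end{equation*}
for a constant $C = C(\gamma)$, using once more the positive lower bound of $V$ on $\mathbf{\Delta}$ to absorb the additive constants.

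The only genuine difficulty I anticipate is the global upper bound for $P^h V$ on $\mathcal{A}$ (more precisely, on $\mathcal{A} \cap B$, since $\mathcal{A}$ need not be contained in $\mathbf{\Delta}\setminus B$): one has to revisit the final part of the proof of Theorem \ref{lyapunov} to confirm that the constant $(M_0 + M_1)e^{h\sqrt{m\bar E}}$ there, together with the trivial bound $V \le M$ on $\mathcal{A}$, provides a uniform upper bound. Everything else is bookkeeping: matching the exponents $\beta$ and $\hat\beta$, ensuring $W \ge 1$, and converting the weighted sum into a moment of $\tau_{\mathcal{A}}$.
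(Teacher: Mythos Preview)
Your proposal is correct and follows essentially the same approach as the paper: set $\alpha=\tfrac12-\tfrac{\gamma}{8}$, combine Theorem~\ref{lyapunov} on $\{V>M\}$ with a uniform bound on $P^hV$ over $\mathcal{A}$ to obtain the drift inequality \eqref{lyapunov1}, apply Theorem~\ref{thm36}, and convert the weighted sum into a moment of $\tau_{\mathcal{A}}$. Your care about $\mathcal{A}\cap B$ is well placed---the paper's appeal to Corollary~2.3 alone is slightly glib there, and the fix is exactly the one you describe (on $\mathcal{A}\cap B$ the proof of Theorem~\ref{lyapunov} already gives $P^hV\le V+(M_0+M_1)e^{h\sqrt{m\bar E}}\le M+\text{const}$).
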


\begin{proof} We apply Theorem 2.1 to $V_{\frac{1}{2} -
    \frac{\gamma}{8}}$. From Corollary 2.3, it follows that if $W = \max\{ V_{\frac{1}{2} - \frac{\gamma}{8}}, 1\}$, then
$b := \sup_{\mathbf{x} \in \mathcal{A} } \left \{ P^{h} W (\mathbf{x}) - W( \mathbf{x}) \right \}  < \infty$, and we have
$$
  P^{h}W - W \leq - c W^{1 - \frac{2}{4 -
      \gamma}} + b \mathbf{1}_{\mathcal{A}} \,.
$$
Theorem \ref{thm36} then tells us that there is 
a constant $\hat{c}$ such that
for all $\mathbf{x}$,
$$
  \mathbb{E}_{{\mathbf{x}}}\left[ \sum_{k = 0}^{\tau_{\mathcal{A}} - 1} (k+1)^{1 -
    \frac{\gamma}{2}} \right] \leq \hat{c}W(\mathbf{{x}}) \,.
$$
As $W \leq   C_{2} V_{\frac{1}{2} -  \frac{\gamma}{8}} $ for some 
constant $C_{2} > 0$ that depends only on $\bar{E}, m$ and
$\gamma$, it follows that
$$
  \mathbb{E}_{\mathbf{{x}}} \left[ \tau_{\mathcal{A}}^{2 - \frac{\gamma}{2}}\right] \leq 2 \cdot
  \mathbb{E}_{\mathbf{{x}}}\left[ \sum_{k = 0}^{\tau_{\mathcal{A}} - 1} (k+1)^{1 - \frac{\gamma}{2}}\right] \leq
  2 C_{2} \cdot \hat{c} \cdot V_{\frac{1}{2} -  \frac{\gamma}{8}} ( \mathbf{{x}}) \,.
$$

This completes the proof.
\end{proof}

\medskip
Recall that for small $\delta>0$, $\mathcal{M}_\delta$ is the set of Borel probability
measures $\mu$ on $\mathbf{\Delta}$ such that
$$
  \int_{\mathbf{\Delta}}\left ( \sum_{k = 1}^{m}
     (x^{k})^{2\delta - 1 } + y^{\delta - \frac{1}{2}}\right ) \mu( \mathrm{d \mathbf{x}} )
     \ \equiv \ \int_{\mathbf{\Delta}} V_{\frac12 - \delta}(\mathbf{x}) \mu( \mathrm{d \mathbf{x}} )
\ < \    \infty \,.
$$

\smallskip

\begin{proof}[Proof of Theorem 3.]
Let $\gamma>0$ and $h > 0$ be as above,  and let
$\mu, \nu \in \mathcal{M}_{\gamma/8}$ be given.
It follows from Proposition 3.5 that 
$$
  \mathbb{E}_{\mu}[\tau_{\mathcal{A}}^{2 - \frac{\gamma}{2}} ] \ , \quad \mathbb{E}_{\nu}[
   \tau_{\mathcal{A}}^{2 - \frac{\gamma}{2}} ] < \infty \,.
$$

Observe next that $\mathcal{A}$ is a special reference set in the sense of 
Sect. 3.2{\bf (A)}; this follows from Proposition 3.1, for
$\mathcal{A} \subset A_{\epsilon}$ for $\epsilon>0$ small enough. 
We split the chain as discussed in Sect. 3.2{\bf (A)}, denoting the split chain 
by $\tilde{\mathbf{x}}_{n}$, and let $\mathcal{A}_{0}$ and $\mathcal{A}_{1}$ be 
identical copies of $\mathcal{A}$ in $\tilde{\mathbf{\Delta}}$, with $\mathcal{A}_{1} = \alpha$ being an atom.

To apply Corollary 3.3 to the chain $\tilde{\mathbf{x}}_{n}$, we first 
check that the atom $\alpha$ is accessible: It is easy to see that  if $\tilde{\tau}$ is
first passage time of $\tilde{\mathbf{x}}_{n}$, then $\tau_{\mathcal{A}} = \tilde{\tau}_{\mathcal{A}_{0} \cup \mathcal{A}_{1}}$,
and from Theorem 1, we know that $\mathcal{A}$ is accessible under $\mathbf{x}_{n}$.
Moreover, every time $\tilde{\mathbf{x}}_{n}$ returns to $\mathcal{A}_{0}\cup
\mathcal{A}_{1}$, it has probability $ \eta$ of entering $\alpha$. This guarantees
the accessibility of $\alpha$. Aperiodicity of return times to $\alpha$ follows
from the fact that for all $\tilde{\mathbf{x}}_0$, 
$\mathbb P[\tilde{\mathbf{x}}_1 \in \alpha] >0$.

It remains to pass the moments of $\tau_{\mathcal{A}}$ to the moments $\tilde{\tau}_{\alpha}$. For a measure $\lambda$ on $\mathbf{\Delta}$,  $\tilde \lambda$ denotes
its lift to $\tilde{\mathbf{\Delta}}$.

\smallskip
\begin{lem}
\label{lem31ofNT}
(i) $\mathbb{E}_{\alpha}[\tilde{\tau}_{\alpha}^{2 - \gamma}] < \infty $\ .

(ii) $\mathbb{E}_{\tilde{\lambda}}[\tilde{\tau}_{\alpha}^{2 - \gamma} ] < \infty $ 
for $\lambda$ with $\mathbb{E}_{\lambda}[ \tau_{\mathcal{A}}^{2 - \frac{\gamma}{2} }] <
\infty$ .
\end{lem}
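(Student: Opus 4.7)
The plan is to reduce $\tilde{\tau}_{\alpha}$ to a geometric sum of first-passage times to $\mathcal{A}_{0}\cup\mathcal{A}_{1}$ for the split chain and then invoke a random-sum moment inequality. Two features of Nummelin's splitting drive the argument. First, the identity $(\mu\mathcal{P})^{*}=\mu^{*}\tilde{\mathcal{P}}$ from Sect.~3.2(A) implies that the projection of the split chain started from a lifted measure $\tilde{\mu}$ coincides in law with the base chain started from $\mu$; in particular the first visit of $\tilde{\mathbf{x}}_{n}$ to $\mathcal{A}_{0}\cup\mathcal{A}_{1}$ has the same distribution as $\tau_{\mathcal{A}}$ under $\mu$. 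Second, a short case analysis of the three kernels defining $\tilde{\mathcal{P}}$ shows that for every $x\in\tilde{\mathbf{\Delta}}$,
\[
\tilde{\mathcal{P}}(x,\mathcal{A}_{1})\;=\;\eta\,\tilde{\mathcal{P}}(x,\mathcal{A}_{0}\cup\mathcal{A}_{1}),
\]
so by the strong Markov property the labels at the successive visits to $\mathcal{A}_{0}\cup\mathcal{A}_{1}$ are i.i.d.\ Bernoulli$(\eta)$ given the trajectory.

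Let $\tau^{(1)}<\tau^{(2)}<\cdots$ be these visit times and set $K=\min\{i\geq 1:\tilde{\mathbf{x}}_{\tau^{(i)}}\in\mathcal{A}_{1}\}$, so $K$ is geometric with parameter $\eta$ and $\tilde{\tau}_{\alpha}=\tau^{(K)}=\sum_{i=1}^{K}\xi_{i}$ with $\xi_{i}:=\tau^{(i)}-\tau^{(i-1)}$ and $\tau^{(0)}:=0$. The elementary inequality $\bigl(\sum_{i=1}^{K}\xi_{i}\bigr)^{2-\gamma}\leq K^{1-\gamma}\sum_{i=1}^{K}\xi_{i}^{2-\gamma}$, together with the finiteness of all moments of a geometric random variable, reduces the proof to uniform control of the $(2-\gamma)$-moment of each $\xi_{i}$. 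For $i\geq 2$, on the event $\{K\geq i\}$ the state $\tilde{\mathbf{x}}_{\tau^{(i-1)}}$ has label $\mathcal{A}_{0}$ and projects into $\mathcal{A}$. Applying Theorem \ref{thm36} to the split chain with the natural extension $\tilde{V}$ of $V_{\frac{1}{2}-\frac{\gamma}{8}}$ to $\tilde{\mathbf{\Delta}}$---which inherits the drift inequality of Theorem \ref{lyapunov} outside $\mathcal{A}_{0}\cup\mathcal{A}_{1}$ since $\tilde{\mathcal{P}}\tilde{V}$ equals $P^{h}V_{\frac{1}{2}-\frac{\gamma}{8}}$ there, and is bounded on $\mathcal{A}_{0}\cup\mathcal{A}_{1}$---then yields a uniform conditional bound $\mathbb{E}[\xi_{i}^{2-\gamma/2}\,|\,\mathcal{F}_{\tau^{(i-1)}}]\leq CM$.

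For part (ii), the first structural fact gives $\xi_{1}\stackrel{d}{=}\tau_{\mathcal{A}}$ under $\lambda$, so $\mathbb{E}[\xi_{1}^{2-\gamma/2}]<\infty$ by hypothesis, and the random-sum bound delivers $\mathbb{E}_{\tilde{\lambda}}[\tilde{\tau}_{\alpha}^{2-\gamma}]<\infty$. For part (i), the first step from $\alpha$ puts the chain at the lifted measure $\theta^{*}=U_{\mathcal{A}}^{*}$, so $\tilde{\tau}_{\alpha}$ under $\delta_{\alpha}$ is stochastically dominated by $1+\tilde{\tau}_{\alpha}$ under $U_{\mathcal{A}}^{*}$; (i) therefore follows from (ii) with $\lambda=U_{\mathcal{A}}$, for which the hypothesis $\mathbb{E}_{U_{\mathcal{A}}}[\tau_{\mathcal{A}}^{2-\gamma/2}]<\infty$ is immediate from Lemma \ref{taildiscrete} combined with $V_{\frac{1}{2}-\frac{\gamma}{8}}\leq M$ on $\mathcal{A}$. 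The main subtle point requiring care is the second structural fact: because the transition kernels from $\mathcal{A}_{0}$ and from $\mathcal{A}_{1}$ are asymmetric---the former uses the residual $[(\mathcal{P}(x,\cdot))^{*}-\eta\theta^{*}]/(1-\eta)$ while the latter uses $\theta^{*}$---verifying that every visit to $\mathcal{A}_{0}\cup\mathcal{A}_{1}$ lands in $\mathcal{A}_{1}$ with conditional probability exactly $\eta$ requires a three-case computation exploiting that $\theta^{*}$ assigns mass $\eta$ to $\mathcal{A}_{1}$ and mass $1-\eta$ to $\mathcal{A}_{0}$.
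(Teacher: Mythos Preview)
Your argument is correct and follows a genuinely different route from the paper's. Both proofs rest on the same two structural facts---that the index $K$ of the first visit landing in $\mathcal{A}_1$ is geometric with parameter $\eta$, and that the inter-visit times $\xi_i$ have a uniformly bounded $(2-\gamma/2)$-moment when started from $\mathcal{A}_0$---but they combine them differently. The paper converts the uniform moment bound into a uniform tail bound $\mathbb{P}[\xi_i>k]\le C'k^{-(2-\gamma/2)}$ via Markov's inequality, and then uses the union-bound trick
\[
\{\tilde\tau_\alpha>k^{1+\delta}\}\subset\{K>k^\delta\}\cup\bigcup_{n\le k^\delta}\{\xi_{n+1}>k\},
\]
choosing $\delta$ small enough so that the resulting tail estimate $\mathbb{P}[\tilde\tau_\alpha>k]\le Ck^{\gamma'-2}$ with $\gamma'<\gamma$ integrates to a finite $(2-\gamma)$-moment. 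You instead work directly with moments via the power-mean inequality $(\sum_{i=1}^K\xi_i)^{2-\gamma}\le K^{1-\gamma}\sum_{i=1}^K\xi_i^{2-\gamma}$, which avoids the auxiliary exponent $\delta$ altogether. Your passage from the base-chain moment bound to the split-chain bound, by lifting $V$ to $\tilde V$ and checking that the drift inequality survives, is actually more explicit than the paper's appeal to Proposition~3.5, since starting from $\mathcal{A}_0$ the split chain uses the residual kernel rather than $\mathcal{P}$ itself.

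One phrasing to tighten: the labels at successive visits are \emph{not} i.i.d.\ Bernoulli$(\eta)$ conditional on the full projected trajectory, since the label at visit $i$ influences the transition out of $\tau^{(i)}$ and hence the location at $\tau^{(i+1)}$. What is true---and what your computation $\tilde{\mathcal{P}}(x,\mathcal{A}_1)=\eta\,\tilde{\mathcal{P}}(x,\mathcal{A}_0\cup\mathcal{A}_1)$ actually shows---is that each label is Bernoulli$(\eta)$ given the \emph{past}, so the labels form an i.i.d.\ sequence and $K-(i-1)$ is geometric$(\eta)$ conditionally on $\{K\ge i\}$ and on $\mathcal{F}_{\tau^{(i-1)}}$ together with $\xi_i$. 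This is exactly the independence you need to turn $\mathbb{E}[K^{1-\gamma}\sum_{i\le K}\xi_i^{2-\gamma}]$ into a convergent sum, and it would be worth stating it in this form.
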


\smallskip
This lemma follows from Lemma 3.1 of \cite{nummelin1983rate}; we provide
an elementary proof below for completeness. Assuming Lemma \ref{lem31ofNT}, 
we may now apply Corollary 3.3 to $\tilde{\mathbf{x}}_{n}$ with $\beta = 2-\gamma$,
giving a convergence rate of $t^{2-\gamma}$. To finish, recall from Sect. 3.2{\bf (A)}
that if
$\mathcal{P}$ and $\mathcal{\tilde P}$ are the Markov operators for $\mathbf{x}_{n}$ 
and $\tilde{\mathbf{x}}_{n}$ respectively, then
$\|\mu \mathcal{P}^n - \nu \mathcal{P}^n\|_{\rm TV} \le 
\| \tilde \mu \mathcal{\tilde P}^n - \tilde \nu \mathcal{\tilde P}^n\|_{\rm TV}$.
\end{proof}

\smallskip
\begin{proof}[Proof of Lemma \ref{lem31ofNT}]
To prove (i), it suffices to show that for some $\gamma'<\gamma$, there exists $C$
such that
\begin{equation} \label{tail}
  \mathbb{P}_{\alpha}[ \tilde{\tau}_{\alpha} > k ] \leq
  C k^{\gamma' - 2}  \qquad \mbox{ for  all } k\ .
\end{equation}
Let $\tau_n, n=1,2 \dots$, denote the $n$th entrance time into 
$\mathcal{A}_{0}\cup \mathcal{A}_{1}$, and let $\mathbf{N}$ be smallest
$n$ such that $\tilde \tau_\alpha = \tau_n$. Since at each $\tau_{n}$, the probability
of being in $\alpha$ is $\eta$, we have $\mathbb{P}[ \mathbf{N} > k]
= (1 - \eta)^{k}$. Note also that since 
$\sup _{\mathbf{x}
  \in \mathcal{A}} \mathbb{E}_{\mathbf{x}}[ \tau_{\mathcal{A}}^{2 -
  \frac{\gamma}{2} }] < \infty$ by Proposition 3.5, it follows that
$$
  \mathbb{P}[ \tau_{n+1} - \tau_{n} \geq k \,|\, \mathbf{N} > n,
  \tilde{\mathbf{x}}_{\tau_{n}}] \leq C' k^{-(2 - \frac{\gamma}{2})} 
$$
for some constant $C' $.

For any $\delta > 0$, we have
$$
  \{ \tau_{\mathbf{N}} > k^{1 + \delta} \} \subset \{ \mathbf{N} >
  k^{\delta} \} \cup \bigcup_{n = 0}^{\left \lfloor k^{\delta} \right
    \rfloor } \{ \tau_{n+1} - \tau_{n} > k ; \mathbf{N} > n \} \,.
$$
Thus
\begin{eqnarray*}
&&\mathbb{P}_{\alpha}[ \tilde \tau_{\alpha} > k^{1 + \delta} ]\\
&  \leq  &
\mathbb{P}_{\alpha}[ \mathbf{N} > k^{\delta} ] + \sum_{ n = 0}^{\left
    \lfloor k^{\delta} \right\rfloor } \mathbb{P}_{\alpha}[ \tau_{n+1}
- \tau_{n} > k\,|\, \mathbf{N} > n] \\
&=& \mathbb{P}_{\alpha}[ \mathbf{N} > k^{\delta} ] + \sum_{ n = 0}^{\left
    \lfloor k^{\delta} \right\rfloor } \int \mathbb{P}_{\alpha}[ \tau_{n+1}
- \tau_{n} > k\,|\, \mathbf{N} > n, \tilde{\mathbf{x}}_{\tau_{n}} = \mathbf{\tilde{x}}]
\mathbb{P}_{\alpha}[\tilde{\mathbf{x}}_{\tau_{n}} =
\mathrm{d}\mathbf{\tilde{x}}, \mathbf{N} > n ]\\
&\leq& (1 - \eta)^{k^{\delta}} + k^{\delta} C' k^{-(2 -
  \frac{\gamma}{2})} \,.
\end{eqnarray*}
Noting that the second term dominates for large $k$, we obtain (\ref{tail}) by choosing $\delta$ sufficiently
  small. 

The proof of (ii) follows similar steps and uses the finiteness of
$\mathbb{E}_{\mu}[ \tau_{\mathcal{A}}^{2 -
  \frac{\gamma}{2}}]$.
\end{proof}


\begin{proof}[Proof of Theorem 2.] 
A simple computation using the density of $\pi$
shows that $\pi \in \mathcal{M}_\delta$ for every $\delta > 0$. 
Also, for every $\mathbf{x} \in {\bf \Delta}$, the point mass $\delta_{\mathbf{x}}$
clearly belongs in $\mathcal{M}_\delta$ for all $\delta > 0$. 
Thus Theorem 2 is a special case of Theorem 3, with $\mu=\pi$ and 
$\nu=\delta_{\mathbf{x}}$.
\end{proof}

\begin{proof}[Proof of Theorem 4.] As a direct consequence of Theorem 3, we have
\begin{eqnarray*}
& &  \left|\int (P^{t} \zeta)(
  \mathbf{x}) \xi( \mathbf{x}) \mu( \mathrm{d}\mathbf{x}) - 
  \int (P^{t}\zeta)( \mathbf{x}) \mu(\mathrm{d}\mathbf{x}) \int \xi( \mathbf{x}) \mu(
  \mathrm{d} \mathbf{x}) \right| \\
& = & \left| \int \xi (\mathbf{x}) \left( (P^{t}\zeta)(\mathbf{x}) - 
\int (P^{t}\zeta)(\mathbf{z}) \mu(\mathrm{d}\mathbf{z}) \right) 
\mu( \mathrm{d} \mathbf{x}) \right|\\
& \le & \|\xi\|_{L^{\infty}} \ \|\zeta\|_{L^{\infty}} \ \int \|\delta_{\mathbf{x}}P^t - \mu P^t\|_{TV} \ 
\mu( \mathrm{d} \mathbf{x})\ = \ o \left(\frac{1}{t^{2-\gamma}} \right) \,.
\end{eqnarray*}
\end{proof}

\begin{proof}[Proof of Theorem 5.]
Theorem 5 follows in a straightforward way from Theorem 3 and the Markov
chain central limit theorem (Corollary 2 of \cite{jones2004markov}).
It is a simple exercise to check that all conditions are satisfied by
the time-$\delta$ chain $\{\mathbf{x}_{n\delta}\}_{n = 0}^{\infty}$
for any $\delta > 0$.
\end{proof}

%
%
%
\subsection{Alternate proof of Theorem 3} \ 

\medskip
As pointed out by one of our reviewers, Theorem 3 also follows from 
Theorem 4.1 in \cite{hairer2010convergence}. We thank him/her for
pointing us to this result. Below we recall the statement of it, 
and then show how to use it to deduce Theorem 3. 

\medskip
%
%
%
%
%
Let $\Psi_{t}$ be a strong Markov chain on a metric space $X$ with
infinitesimal generator $\mathcal{L}$ and associated semigroup
$\mathcal{P}_{t}$. The following result of sub-geometric rates of
convergence holds. 

\begin{thm}[Theorem 4.1 of \cite{hairer2010convergence}]
Assume $\Psi_{t}$ has a cadlag modification and $\mathcal{P}_{t}$ is
Feller. Assume furthermore that there exists a continuous function $V: X
\mapsto [1, \infty )$ with pre-compact sublevel sets such that 
$$
  \mathcal{L} V \leq K - \phi(V)
$$
for some constant $K$ and for some strictly concave function $\phi:
\mathbb{R}^{+} \rightarrow \mathbb{R}^{+}$ with $\phi(0) = 0$ and
increasing to infinity. In addition, we assume that sublevel sets of
$V$ are ``small'' in the sense that for every $C > 0$ there exists
$\alpha > 0$ and $T > 0$ such that 
$$
  \| \mathcal{P}_{T}(x, \cdot) - \mathcal{P}_{T}(y, \cdot) \|_{TV}
  \leq 2(1 - \alpha)
$$
for every $(x,y)$ such that $V(x) + V(y) \leq C$. Then
\begin{itemize}
  \item There exists a unique invariant measure $\mu$ for $\Psi_{t}$
    and $\mu$ is such that 
$$
  \int_{X} \phi(V(x)) \mu( \mathrm{d}x) \leq K
$$
\item Let $H_{\phi}$ be the function defined by
$$
  H_{\phi}(u) = \int_{1}^{u} \frac{\mathrm{d}s}{\phi(s)} \,.
$$
Then, there exists a constant $C$ such that for every $x, y \in X$,
one has the bounds
$$
  \| \mathcal{P}_{t}(x, \cdot) - \mathcal{P}_{t}(y, \cdot) \|_{TV}
  \leq C \frac{V(x) + V(y)}{H^{-1}_{\phi}(t)}\ .
$$
\end{itemize}
\end{thm}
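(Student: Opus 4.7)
My plan is to combine a coupling argument with the two structural hypotheses to obtain the quantitative rate. First, existence of an invariant measure $\mu$ follows from Krylov--Bogolyubov: the Feller property together with $\mathcal L V \leq K - \phi(V)$, pre-compactness of sublevel sets of $V$, and $\phi \to \infty$ yield tightness of the time averages $\tfrac{1}{T}\int_0^T \mathcal P_t(x, \cdot)\, dt$, any weak limit of which is invariant. Integrating the Lyapunov inequality against $\mu$ and using $\int \mathcal L V\, d\mu = 0$ immediately yields $\int_X \phi(V)\, d\mu \leq K$. Uniqueness will be a consequence of the quantitative TV bound constructed below.

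For the convergence bound I would build a Markovian coupling $(X_t, Y_t)$ of two copies of $\Psi_t$ starting from $x$ and $y$. Fix $C$ large enough that the small-set hypothesis applies on $\{(u,v) : V(u)+V(v) \leq C\}$. The two marginals evolve independently until they first enter this sublevel set; at that moment one attempts coalescence using the minorization $\|\mathcal P_T(u,\cdot) - \mathcal P_T(v,\cdot)\|_{TV} \leq 2(1-\alpha)$, succeeding with probability at least $\alpha$. On failure, wait for the next return to the sublevel set and try again. The coupling time $\tau$ is then stochastically dominated by a geometric ($\alpha$-parameter) sum of return times $\sigma_i$ plus the corresponding $T$-delays, and $\|\mathcal P_t(x,\cdot) - \mathcal P_t(y,\cdot)\|_{TV} \leq \mathbb P[\tau > t]$.

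The crux is a hitting-time estimate matched to the subgeometric rate. The natural object to track is $H_\phi(V)$: the ODE $\dot u = -\phi(u)$ integrates to $H_\phi(u(0)) - H_\phi(u(t)) = t$, so $H_\phi$ ``linearizes'' the $\phi$-decay. I would apply Dynkin's formula to $H_\phi(V(X_t) + V(Y_t))$ for the coupled process, using concavity of $\phi$ (whence $1/\phi$ is nonincreasing and $H_\phi$ is concave) to produce an approximate supermartingale relation whose drift is bounded below by a positive constant on the complement of the sublevel set. Optional stopping should give $\mathbb E[\sigma] \leq C'\, H_\phi(V(x)+V(y))$, and a Markov-type argument combined with the geometric coupling scheme then yields $\mathbb P[\tau > t] \leq C(V(x)+V(y))/H_\phi^{-1}(t)$.

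I expect the \textbf{main obstacle} to be this last step: producing a hitting-time tail that genuinely scales like $1/H_\phi^{-1}(t)$ rather than a weaker polynomial. In the geometric case $\phi(u) = cu$ classical supermartingale arguments deliver exponential tails directly, but when $\phi$ is strictly concave $H_\phi(V)$ is only an \emph{approximate} supermartingale, with drift controlled only where $V$ is large; the error contributions on $\{V \leq M\}$ must be absorbed via the iterated return structure of the coupling. Getting the constants to line up so that the final rate is tight, and not losing a factor when passing from the discrete returns to the continuous-time semigroup, is the delicate part, and it is precisely here that the concavity of $\phi$ together with pre-compactness of sublevel sets genuinely enter.
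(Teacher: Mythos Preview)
The paper does not actually prove this statement: it is quoted verbatim as Theorem~4.1 of \cite{hairer2010convergence} and used as a black box in the alternate proof of Theorem~3. The only commentary the paper offers is a one-sentence remark that ``the proof of Theorem 4.1 uses a different coupling that bypasses the explicit splitting of the Markov chain, and the Lyapunov function is lifted to $X \times X$. Similar estimates of hitting times as in Lemma 3.5 and 3.6 are also ingredients in this proof.''

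Your sketch is consistent with that remark and with Hairer's actual argument: you couple two copies on $X\times X$, work with the lifted Lyapunov function $V(x)+V(y)$, attempt coalescence on sublevel sets via the small-set minorization, and control return times through $H_\phi$. The Krylov--Bogolyubov step and the integrability bound $\int \phi(V)\,d\mu \le K$ are standard and correct. The obstacle you flag---extracting the precise tail $\mathbb P[\tau > t] \lesssim (V(x)+V(y))/H_\phi^{-1}(t)$ from the geometric-sum-of-returns structure---is indeed the technical heart, and Hairer handles it by a careful interpolation argument exploiting concavity of $\phi$ (so that $H_\phi^{-1}$ is subadditive up to constants). Since the paper provides no proof to compare against beyond that single sentence, there is nothing further to contrast; your outline captures the intended strategy.
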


The proof of Theorem 4.1 uses a different coupling that bypasses the 
explicit splitting of the Markov chain, and the Lyapunov function is lifted to $X
\times X$. Similar estimates of hitting times as in Lemma 3.5 and 3.6 are
also ingredients in this proof. 

\begin{proof}[{\bf Proof of Theorem 3 using Theorem 4.1}]

It is a simple exercise to check that (1) $\mathbf{x}_{t}$ is a strong
Markov process with an infinitesimal generator $\mathcal{G}$, and (2) $\mathbf{x}_{t}$ is a Feller
process with cadlag sample paths. 

Let $V(\mathbf{x}) = V_{\alpha}( \mathbf{x})$ be the same Lyapunov
function used before. (One may multiply $V$ by a constant to make its
minimum be greater than $1$, if necessary.) We have
$$
  \mathcal{G}V( \mathbf{x}) = \sum_{i = 1}^{m} Q_{i} \,,
$$
where
$$ 
Q_i \ = \ \sqrt{x^{i}_{0}} \left \{
    \int_{0}^{1} \left[ ( x_{0}^{i}(1 - u^{2}) + y_{0} )^{-2\alpha} +
    (x_{0}^{i} u^{2})^{-\alpha} \right] \mathrm{d}u - \left[(x_{0}^{i})^{-2\alpha} +
    y_{0}^{-\alpha} \right] \right \}\ .
$$

Therefore it follows from Lemma 2.2 that there exist constants
$\epsilon_{0} > 0$ and $c^{*} > 0$ such that
$$
  \mathcal{G}V( \mathbf{x}) \leq - c^{*}V^{\beta}( \mathbf{x})
$$
for every $\mathbf{x} \in B$, where $\beta = 1 - (4\alpha)^{-1}$ and
$$
    B = \left \{ \mathbf{x} \in \mathbf{\Delta} \,|\, y < \epsilon_{0}, \ \mbox{ or } \ 
  x^{i} < 4^{-\frac{1}{2\alpha}} \epsilon_{0} \ \mbox{ for some } \  i
  \in\{ 1, \dots, m \} \right \}\ .
$$
Let 
$$
  K = \sup_{\mathbf{x} \in \mathbf{\Delta}\setminus B} \sum_{i = 1}^{m} Q_{i}(
  \mathbf{x}) \,.
$$
It is easy to check that $K < \infty$ and
$$
  \mathcal{G}V \leq K - c^{*} V^{\beta} \,.
$$

\medskip

It remains to check that the sublevel sets of $V$ are
``small''. Let $A = A_{C}$ be the sublevel set $\{ V \leq C \}$. By the
same proof as in Proposition 3.1, for any $t > 0$ and $C > \min V(
\mathbf{x})$, there exists a constant $\eta = \eta(C, t)$ such that for every $\mathbf{x} \in A$, 
$$ 
  P^{t}(\mathbf{x}, \cdot) \geq \eta U_{A}(\cdot) \,,
$$
where $U_{A}$ is the uniform probability measure on $A$. This implies 
$$
  \| P^{t}(\mathbf{x}, \cdot) - P^{t}(\mathbf{y}, \cdot) \|_{TV} \leq
  2(1 - \eta) \,
$$
for each $\mathbf{x}, \mathbf{y}$ such that $V( \mathbf{x}) +
V(\mathbf{y}) \leq C$. 

\medskip

Therefore, let $\phi(x) = c^{*} x^{\beta}$, by Theorem 4.1, we have
$$
  \|P^{t}(\mathbf{x}, \cdot) - P^{t}(\mathbf{y}, \cdot)\|_{TV} \leq
  C_{0} \frac{V(\mathbf{x}) + V(\mathbf{y})}{ \left (c^{*} \int_{1}^{t} s^{(4
      \alpha)^{-1} - 1} \mathrm{d}s \right )^{-1}}  = C'_{0}
(V(\mathbf{x}) + V( \mathbf{y}) ) t^{-4\alpha}\,.
$$
for some constants $C_{0}$ and $C_{0}'$. The proof of Theorem 3 is
completed by letting $\alpha = \frac{1}{2} - \gamma/4$.
\end{proof}

\bigskip

\bibliography{ref}{}
\bibliographystyle{plain}

\end{document}